\theoremstyle{plain}
 \newtheorem{theorem}{Theorem}[section]
 \newtheorem{lemma}{Lemma}[section]
 \newtheorem{property}{Property}[section]
 \newtheorem{corollary}{Corollary}[section]
\theoremstyle{definition}
 \newtheorem{example}{Example}[section]
 \newtheorem{definition}{Definition}[section]
\theoremstyle{remark}
 \newtheorem{remark}{Remark}[section]
 \numberwithin{equation}{section}
\title[Lyapunov and Hartman-Wintner type inequalities]{Lyapunov and Hartman-Wintner type inequalities for a nonlinear fractional boundary value problem with generalized Hilfer derivative}
\subjclass[2010]{Primary 26D10, 47J20; Secondary 26A33}
\keywords{Fractional boundary value problem, Lyapunov inequality, Hartman-Wintner inequality, positive solutions, generalized Hilfer derivative, Green function,  Jensen's inequality, Krasnoselskii fixed point theorem.}
\author[Kirane]{\bfseries Mokhtar Kirane$^{1, 2}$}
\address{$^{1}$LaSIE, Facult\'{e} des Sciences, Pole Sciences et Technologies, Universit\'{e} de La Rochelle, Avenue M. Crepeau, 17042 La Rochelle Cedex, France}
\address{$^{2}$Nonlinear Analysis and Applied Mathematics (NAAM) Research Group, Department of Mathematics, Faculty of Science, King Abdulaziz University, P.O. Box 80203, Jeddah 21589, Saudi Arabia}
\email{mkirane@univ-lr.fr}
\author[Torebek]{\bfseries Berikbol T. Torebek$^{3}$}
\address{$^{3}$Department of Differential Equations,\\ Institute of Mathematics and Mathematical Modeling\\
Pushkin street 125, 050010, Almaty, Kazakhstan}
\email{torebek@math.kz}
\begin{document}

\vspace{18mm} \setcounter{page}{1} \thispagestyle{empty}

\begin{abstract}
In this work, we obtain a Lyapunov-type and a Hartman-Wintner-type inequalities for a linear and a nonlinear fractional differential equation with generalized Hilfer operator subject to Dirichlet-type boundary conditions. We prove existence of positive solutions to a nonlinear fractional boundary value problem. As an application, we obtain a lower bound for the eigenvalues of corresponding equations.
\end{abstract}

\maketitle

\section{Introduction and main results}

Lyapunov's inequality is an outstanding result in mathematics with many applications -- see \cite{Tiryaki10, Hashizume15, SunLiu15} and references therein. The result, as proved by Lyapunov in \cite{Lyapunov93}, asserts that if $q\in C\left([a,b];\mathbb{R}\right),$ then a necessary condition for the boundary value problem \begin{equation}\label{1}\left\{\begin{array}{l}u''(t)+q(t)u(t)=0,\,a<t<b,\\ {}\\u(a)=u(b)=0,\end{array}\right.
\end{equation} to have a nontrivial classical solution is given by
\begin{equation}\label{2}\int\limits_{a}^{b}\left|q(s)\right|ds>\frac{4}{b-a}.\end{equation}

Looking for a generalization for fractional differential equations, in \cite{Ferreira13}, Ferreira investigated a Lyapunov-type inequality for the Riemann-Liouville fractional boundary value problem
\begin{equation}\label{3} \left\{\begin{array}{l}D^{\alpha}_{a}u(t)+q(t)u(t)=0,\,a<t<b,\\{} \\u(a)=u(b)=0,\end{array}\right.\end{equation}
where $D^{\alpha}_{a}$ is the (left) Riemann-Liouville derivative of order $\alpha\in(1,2]$ and $q\in C\left([a,b];\mathbb{R}\right).$ He proved that, if \eqref{3} has a nontrivial classical solution, then \begin{equation}\label{4}\int\limits_a^b\left|q(s)\right|ds> \Gamma(\alpha)\left(\frac{4}{b-a}\right)^{\alpha-1}.\end{equation} In \cite{Ferreira14} Ferreira considered the fractional boundary value problem with $\mathcal{D}^{\alpha}_a$ Caputo derivative of order $1<\alpha\leq 2$ \begin{equation}\label{5}\left\{\begin{array}{l}\mathcal{D}^{\alpha}_a u(t)+q(t)u(t)=0,\,a<t<b,\\{}\\u(a)=u(b)=0,\end{array}\right.\end{equation} and an interesting Lyapunov-type inequality was estabilished
\begin{equation}\label{6}\int\limits_a^b \left|q(s)\right|ds>\frac{\Gamma(\alpha)\alpha^\alpha}{\left((\alpha-1)(b-a)\right)^{\alpha-1}},\end{equation} where $q\in C\left([a,b];\mathbb{R}\right).$

Moreover, some Lyapunov-type inequalities for linear fractional boundary value problems have been obtained in \cite{MaMaWang16, JieliSamet15, O'ReganSamet15, RongBai15, EshaghiAnsari16, Ferreira16, ChidouhTorres17}.

Recently, Chidouh and Torres \cite{ChidouhTorres17} obtained the following Lyapunov-type inequality for the Riemann-Liouville fractional-order nonlinear boundary value problem \begin{equation}\label{6*}\left\{\begin{array}{l}D^{\alpha}_{a}u(t)+q(t)f(u)=0, \,a<t<b,\\{}\\u(a)=u(b)=0,\end{array}\right.\end{equation} where $q:\, [a,b]\rightarrow \mathbb{R}$ is a real nontrivial Lebesgue integrable function and $f\in C\left(\mathbb{R}_+, \mathbb{R}_+\right)$ is a concave and nondecreasing function. They demonstrated that, if \eqref{6*} has a nontrivial solution, then \begin{equation}\label{6**}\int\limits_a^b\left|q(s)\right|ds> \frac{\Gamma(\alpha)4^{\alpha-1}\eta}{\left(b-a\right)^{\alpha-1}f(\eta)},\end{equation} where $\eta=\max\limits_{a\leq t\leq b}|u|.$

In \cite{HartmanWintner}, Hartman and Wintner proved that if \eqref{1} have a nontrivial solution, then \begin{equation}\label{HW}\int\limits^b_a (b-s)(s-a)q^+(s)ds >b-a,\end{equation} where $q^+(s)=\max\{q(s),0\}.$ Observe that Lyapunov inequality \eqref{2} can be deduced from \eqref{HW} using the fact that $$\max\limits_{a\leq s\leq b}(b-s)(s-a)=\frac{(b-a)^2}{4}.$$

Recently, some Hartman-Wintner-type inequalities were obtained for different fractional boundary value problems. In this direction, we refer to Cabrera, Sadarangania, Samet \cite{CabreraSadaranganoSamet} and Jleli, Kirane, Samet \cite{JleliKiraneSamet}.

In this paper we succeeded to generalize inequalities \eqref{2}, \eqref{4}, \eqref{6} and \eqref{6**} for the nonlinear fractional boundary value problem
\begin{equation}\label{7} \left\{\begin{array}{l}D^{\alpha, \mu}_{a}u(t)+q(t)f(u)=0, \,a<t<b,\\{}\\u(a)=u(b)=0,\end{array}\right.\end{equation} where $D^{\alpha, \gamma}_a,\, 1<\alpha\leq\gamma<2$ is a generalized Hilfer fractional derivative (see Section \ref{Sec2}).

Recently, nonlinear fractional differential equations have been investigated extensively. The motivation for those works rises from both the development of the theory of fractional calculus itself and the applications of such constructions in various sciences such as physics, chemistry, aerodynamics, electrodynamics of complex medium, and so on. For examples and details, see \cite{BabakhaniGejji, Zhang03, Kaufmann, Bonillaetal, DaftardarBhalekar, CabadaWang, CabadaInfante, InfanteRihani} and the references therein. The Dirichlet problem for fractional differential equations and systems of fractional differential equations is discussed in papers \cite{BaiLu, ShiZhang, JiangYuan, ZhangLiu}. In \cite{BaiLu}, the authors investigate the existence and multiplicity of positive solutions of problem \eqref{7}, for $q(t)\equiv 1,$ $\gamma=\alpha,$ $a=0,\,b=1.$

The paper is organized as follows. In Section \ref{Sec2} we recall some properties and definitions of fractional operators. In Section \ref{Sec3}, we firstly derive the corresponding Green's function; consequently problem \eqref{7} is reduced to a equivalent Fredholm integral equation of the second kind. Finally, we prove Lyapunov-type inequality for the linear case of problem \eqref{7}. In Section \ref{Sec4}: using the Krasnoselskii fixed point theorem, the existence and multiplicity of positive solutions are obtained. In Section \ref{Sec5}, assuming that $f: \mathbb{R}_+ \rightarrow \mathbb{R}_+$ is continuous, concave and nondecreasing, we generalize Lyapunov's inequality \eqref{6**}. In Section \ref{Sec6}, for continuous, concave and nondecreasing $f: \mathbb{R}_+ \rightarrow \mathbb{R}_+$ we obtain Hartman-Wintner's type inequality \eqref{17}.

\section{Definitions and some properties of fractional operators}\label{Sec2}

In this section, we compile some basic definitions and properties of fractional differential operators.

\begin{definition} \cite{KilbasSrivastavaTrujillo} (Riemann-Liouville integral). Let $f$ be a locally integrable real-valued function on $-\infty\leq a<t<b\leq+\infty.$ The Riemann--Liouville fractional integral $I_{a} ^\alpha$ of order $\alpha\in\mathbb R$ ($\alpha>0$) is defined as
$$
I_{a} ^\alpha  f\left( t \right) = \left(f*K_{\alpha}\right)(t) ={\rm{
}}\frac{1}{{\Gamma \left( \alpha \right)}}\int\limits_a^t {\left(
{t - s} \right)^{\alpha  - 1} f\left( s \right)} ds,$$
where $K_{\alpha}(t)=\frac{t^{\alpha-1}}{\Gamma(\alpha)},$ $\Gamma$ denotes the Euler gamma function.
\end{definition}

\begin{definition} \cite{KilbasSrivastavaTrujillo} (Riemann-Liouville derivative). Let $f\in L^1[a,b],$ $-\infty\leq a<t<b\leq+\infty$ and $f*K_{m-\alpha}\in W^{m,1}[a,b], m=[\alpha]+1, \alpha>0,$ where $W^{m,1}[a,b]$ is the Sobolev space defined as $$W^{m,1}[a,b]=\left\{f\in L^1[a,b]:\,\frac{d^m}{dt^m}f\in L^1[a,b]\right\}.$$ The Riemann--Liouville fractional derivative $D_{a} ^\alpha$ of order $\alpha>0$ ($m-1<\alpha<m,\,m\in \mathbb{N}$) is defined as
$$D_{a} ^\alpha f \left( t \right) = \frac{{d^m }}{{dt^m }}I_{a} ^{1 - \alpha } f \left( t \right)={\rm{}}\frac{1}{{\Gamma \left( m-\alpha \right)}}\frac{d^m}{dt^m}\int\limits_a^t {\left({t - s} \right)^{m-1-\alpha} f\left( s \right)} ds.$$
\end{definition}

\begin{definition} \cite{KilbasSrivastavaTrujillo} (Caputo derivative). Let $f\in L^1[a,b],$ $-\infty\leq a<t<b\leq+\infty$ and $f*K_{m-\alpha}\in W^{m,1}[a,b], m=[\alpha], \alpha>0.$ The Caputo fractional derivative $\mathcal{D}_a^\alpha$ of order $\alpha\in\mathbb R$ ($m-1<\alpha<m,\,m\in \mathbb{N}$) is defined as
\begin{align*}\mathcal{D}_{a} ^\alpha  f \left( t \right) \\&= D_{a}
^\alpha  \left[ f\left( t \right) - f\left( a \right) -f'\left( a \right)\frac{(t-a)}{1!}-... - f^{(m-1)}\left( a \right)\frac{(t-a)^{m-1}}{(m-1)!}\right].\end{align*}

If $f\in C^m[a,b]$ then, the Caputo fractional derivative $\mathcal{D}_a^\alpha$ of order $\alpha\in\mathbb R$ ($m-1<\alpha<m,\,m\in \mathbb{N}$) is defined as $$\mathcal{D}_{a}^\alpha  \left[ f \right]\left( t \right) = I_{a} ^{1 - \alpha } f^{(m)}\left( t \right)={\rm{}}\frac{1}{{\Gamma \left( m-\alpha \right)}}\int\limits_a^t {\left({t - s} \right)^{m-1-\alpha} f^{(m)}\left( s \right)} ds.$$
\end{definition}

\begin{definition} \cite{Hilfer} (Hilfer derivative). Let $f\in L^1[a,b],$ $-\infty\leq a<t<b\leq+\infty,$ $\mu\in[0,1],$ $m-1<\alpha<m,$ $f*K_{(1-\mu)(m-\alpha)}\in AC^m[a,b],$ where $AC^m[a,b]$ is  the space of real-valued functions $f$ which have continuous derivatives up to order $m-1$ on $[a,b]$ such that $f^{(m-1)}$ belongs to the space of absolutely continuous functions $AC[a,b]:$
$$AC^m[a,b]=\left\{f: [a,b]\rightarrow \mathbb{R}:\, f^{(m-1)}\in AC[a,b]\right\}.$$ The Hilfer fractional derivative $D_a^{\alpha,\mu}$ of order $\alpha\in\mathbb R$ ($m-1<\alpha<m,\,m\in \mathbb{N}$) and type $\mu$ is defined as $$D_a^{\alpha,\mu}f(t)=I_a^{\mu(m-\alpha)}\frac{d^m}{dt^m} I_a^{(1-\mu)(m-\alpha)}f(t).$$
\end{definition}

The Hilfer fractional derivative $D_a^{\alpha,\mu}$  is considered as an interpolator between the Riemann-Liouville and Caputo derivative since

$$D_a^{\alpha,0}=D_a^{\alpha},\,\,\,D_a^{\alpha,1}=\mathcal{D}_a^{\alpha}.$$

\begin{definition}\label{def5} \cite{FuratiIyiolaKirane} (generalized Hilfer derivative). Let $f\in L^1[a,b],$ $-\infty\leq a<t<b\leq+\infty,$ $\alpha\leq\mu,$ $m-1<\alpha<m,$ $f*K_{m-\gamma}\in AC^m[a,b],$ where $AC^m[a,b]$ is  the space of real-valued functions $f$ which have continuous derivatives up to order $m-1$ on $[a,b]$ such that $f^{(m-1)}$ belongs to the space of absolutely continuous functions $AC[a,b]:$
$$AC^m[a,b]=\left\{f: [a,b]\rightarrow \mathbb{R}:\, f^{(m-1)}\in AC[a,b]\right\}.$$ The generakized Hilfer fractional derivative $D_a^{\alpha,\gamma}$ of order $\alpha\in\mathbb R$ ($m-1<\alpha<m,\,m\in \mathbb{N}$) and type $\gamma$ is defined as $$D_a^{\alpha,\gamma}f(t)=I_a^{\gamma-\alpha}\frac{d^m}{dt^m} I_a^{m-\gamma}f(t).$$
\end{definition}

The generalized Hilfer fractional derivative $D_a^{\alpha,\gamma}$  is considered as an interpolator between the Riemann-Liouville, Caputo and Hilfer derivative since

$$D_a^{\alpha,\gamma}=\left\{ \begin{array}{l}D_a^{\alpha,\alpha}=D_a^{\alpha},\\{}\\ D_a^{\alpha,m}=\mathcal{D}_a^{\alpha},\\{}\\ D_a^{\alpha,\mu(m-\alpha)+\alpha}=D_a^{\alpha,\mu}.\end{array}\right.$$

\begin{property}\label{pr1} If $f\in L^1[a,b]$ and $\alpha>0,\, \beta>0,$ then the following equality holds $$I_a^\alpha I_a^\beta f(t)=I_a^{\alpha+\beta}f(t).$$
\end{property}

\begin{property}\label{pr2} \cite{KilbasSrivastavaTrujillo} If $f\in L^1[a,b]$ and $f*K_{m-\alpha}\in AC^m[a,b],$ $m=[\alpha]+1,$ then the equality $$I_a^\alpha D_a^\alpha f(t)=f(t)- \sum_{j=1}^m\frac{(t-a)^{\alpha-j}}{\Gamma(\alpha-j+1)}\frac{d^{m-j}}{dt^{m-j}}I_a^{m-\alpha}f(a),$$ holds almost everywhere on $[a,b].$\end{property}

\begin{property}\label{pr3} Let $f\in L^1[a,b]$ and $f*K_{m-\gamma}\in AC^m[a,b],$ $m-1<\alpha\leq\gamma<m,$ $m=[\alpha]+1.$ Then the Riemann-Liouville fractional integral $I_a^{\alpha}$ and the generalized Hilfer derivative $D_a^{\alpha,\gamma}$ are connected by the relation $$I_a^\alpha D_a^{\alpha,\gamma} f(t)=f(t)- \sum_{j=1}^m\frac{(t-a)^{\gamma-j}}{\Gamma(\gamma-j+1)}\frac{d^{m-j}}{dt^{m-j}}I_a^{m-\gamma}f(a).$$ \end{property}

\begin{proof} Using the representation $$D_a^{\alpha,\gamma} f(t)=I_a^{\gamma-\alpha}D_a^{\gamma} f(t)$$ and applying property \ref{pr1} we get
$$I_a^\alpha D_a^{\alpha,\gamma} f(t)=I_a^\alpha I_a^{\gamma-\alpha}D_a^{\gamma} f(t)=I_a^{\gamma}D_a^{\gamma} f(t).$$ Further, by property \ref{pr2} we have
\begin{align*}I_a^\alpha D_a^{\alpha,\gamma} f(t)&=I_a^{\gamma}D_a^{\gamma} f(t) \\&=f(t)- \sum_{j=1}^m\frac{(t-a)^{\gamma-j}} {\Gamma(\gamma-j+1)}\frac{d^{m-j}}{dt^{m-j}}I_a^{m-\gamma}f(a).\end{align*}
\end{proof}

\begin{property}\label{pr4} The following result holds true for the fractional derivative operator $D_a^{\alpha,\gamma}$ defined by definition \ref{def5}: $$D_a^{\alpha,\gamma}(t-a)^{\nu-1}= \frac{\Gamma(\nu)}{\Gamma(\nu-\alpha)}(t-a)^{\nu-\alpha-1},\,\,\,0<\nu,\,0<\alpha\leq\gamma.$$
\end{property}

\section{Lyapunov-type inequality for the linear case of problem \eqref{7}}\label{Sec3}

\begin{lemma}\label{lm1}The function $u(t)$ is a solution of the boundary value problem \eqref{7} if, and only if, $u(t)$ satisfies the integral equation \begin{equation}\label{*}u(t)=\int\limits_a^b G(t,s)q(s)f\left(u(s)\right)ds,\end{equation} where \begin{equation}\label{Green}G(t,s)=\left\{\begin{array}{l}\left(\frac{t-a}{b-a}\right)^{\gamma-1}\frac{(b-s)^{\alpha-1}} {\Gamma(\alpha)} -\frac{(t-s)^{\alpha-1}} {\Gamma(\alpha)},\,\,a\leq s\leq t\leq b,\\{}\\ \left(\frac{t-a}{b-a}\right)^{\gamma-1}\frac{(b-s)^{\alpha-1}} {\Gamma(\alpha)},\,\,a\leq t\leq s\leq b.\end{array}\right.\end{equation}\end{lemma}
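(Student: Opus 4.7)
The plan is to integrate both sides of the fractional differential equation against the $\alpha$-th order Riemann--Liouville integral, use Property~\ref{pr3} to recover $u(t)$ modulo its homogeneous part, and then pin down the two free constants by means of the Dirichlet conditions $u(a)=u(b)=0$. Concretely, I first apply $I_a^{\alpha}$ to $D_a^{\alpha,\gamma}u(t)=-q(t)f(u(t))$. Since $1<\alpha\leq\gamma<2$ forces $m=[\alpha]+1=2$, Property~\ref{pr3} produces
\[
u(t)=c_{1}(t-a)^{\gamma-1}+c_{2}(t-a)^{\gamma-2}-\frac{1}{\Gamma(\alpha)}\int_{a}^{t}(t-s)^{\alpha-1}q(s)f(u(s))\,ds,
\]
with $c_{1},c_{2}$ the constants built from $\frac{d}{dt}I_{a}^{2-\gamma}u(a)$ and $I_{a}^{2-\gamma}u(a)$ appearing in Property~\ref{pr3}.

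Next I would impose the boundary conditions. Because $1<\gamma<2$, the term $(t-a)^{\gamma-2}$ is unbounded as $t\to a^{+}$, so $u(a)=0$ (read in the natural sense of $u$ being continuous at $a$) forces $c_{2}=0$. The remaining condition $u(b)=0$ then yields
\[
c_{1}=\frac{1}{(b-a)^{\gamma-1}\Gamma(\alpha)}\int_{a}^{b}(b-s)^{\alpha-1}q(s)f(u(s))\,ds.
\]
Substituting these values of $c_{1},c_{2}$ back, rewriting the prefactor as $\left(\frac{t-a}{b-a}\right)^{\gamma-1}$, and splitting the $[a,b]$-integral at $s=t$ so as to absorb the Riemann--Liouville convolution into the $a\le s\le t$ branch produces exactly the piecewise kernel $G(t,s)$ displayed in~\eqref{Green} and hence~\eqref{*}.

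For the converse direction I would start from~\eqref{*} and proceed in two steps. First, direct inspection of~\eqref{Green} shows $G(a,s)=0$ (thanks to the $(t-a)^{\gamma-1}$ factor) and $G(b,s)=0$ (because the two pieces at $t=b$ cancel), so the Dirichlet conditions hold. Second, I apply $D_{a}^{\alpha,\gamma}$ to~\eqref{*}: Property~\ref{pr4} (together with the kernel computation $D_{a}^{\alpha,\gamma}(t-a)^{\gamma-1}=0$, which follows from Definition~\ref{def5} since $\frac{d^{2}}{dt^{2}}I_{a}^{2-\gamma}(t-a)^{\gamma-1}\equiv 0$) kills the homogeneous boundary term, while the composition $D_{a}^{\alpha,\gamma}I_{a}^{\alpha}=I_{a}^{\gamma-\alpha}D_{a}^{\gamma}I_{a}^{\alpha}=I_{a}^{\gamma-\alpha}D_{a}^{\gamma-\alpha}$ acts as the identity on the integral term by Property~\ref{pr2}, returning $-q(t)f(u(t))$.

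The main obstacle I anticipate is the bookkeeping around the singular term $(t-a)^{\gamma-2}$: Property~\ref{pr3} legitimately produces it in the inversion formula, so to justify setting $c_{2}=0$ one must work in a function class where $u$ is continuous up to $t=a$ (or, equivalently, interpret $u(a)=0$ as $\lim_{t\to a^{+}}u(t)=0$). Once this is handled, everything else is routine: $c_{1}$ is pinned by $u(b)=0$, the Green's function emerges after collecting terms, and the verification $D_{a}^{\alpha,\gamma}\int_{a}^{b}G(t,s)q(s)f(u(s))\,ds=-q(t)f(u(t))$ reduces to the two identities discussed above.
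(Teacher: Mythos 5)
Your proposal follows essentially the same route as the paper: apply $I_a^{\alpha}$, invoke Property~\ref{pr3} with $m=2$ to obtain $u(t)=c_1(t-a)^{\gamma-1}+c_2(t-a)^{\gamma-2}-I_a^{\alpha}\bigl(qf(u)\bigr)(t)$, kill the singular term via $u(a)=0$, determine $c_1$ from $u(b)=0$, and split the integral at $s=t$ to read off $G(t,s)$. You additionally sketch the converse implication (checking $G(a,s)=G(b,s)=0$ and applying $D_a^{\alpha,\gamma}$ to the integral equation), which the paper's proof leaves implicit, so your argument is, if anything, slightly more complete.
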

\begin{proof}
Taking Riemann-Liouville fractional integral $I_a^{\alpha}$ to both side of the equation \begin{equation}\label{7*}D_a^{\alpha,\gamma}u(t)+q(t)f\left(u(s)\right)=0,\,a<t<b,\end{equation} and using property \ref{pr3}, the solution of the fractional differential equation \eqref{7*} can be written as \begin{align*}u(t)=C_1 \frac{(t-a)^{\gamma-2}} {\Gamma(\gamma-1)} &+C_2 \frac{(t-a)^{\gamma-1}} {\Gamma(\gamma)}\\&- {\rm{
}}\frac{1}{{\Gamma \left( \alpha \right)}}\int\limits_a^t {\left(
{t - s} \right)^{\alpha  - 1}q\left( s \right) f\left(u(s)\right)} ds,\, a<t<b,\end{align*} where $C_1$ and $C_2$ are real constants given by $$C_1=I_a^{2-\gamma}u(a),$$ $$C_2=\frac{d}{dt}I_a^{2-\gamma}u(a).$$ Since $u(a)=0,$ we get $C_1=0.$ From $u(b)=0,$ we have
$$C_2=\frac{\Gamma(\gamma)}{(b-a)^{\gamma-1}}I_a^\alpha q(b)f\left(u(b)\right).$$
Then for the function $u(t),$ we get
\begin{align*}u(t)=C_2 \frac{(t-a)^{\gamma-1}} {\Gamma(\gamma)}&-I_a^{\alpha}q(t)f\left(u(t)\right)\\& =\left(\frac{t-a}{b-a}\right)^{\gamma-1}I_a^{\alpha}q(b)f\left(u(b)\right)- I_a^{\alpha}q(t)f\left(u(t)\right)\\&=\left(\frac{t-a}{b-a}\right)^{\gamma-1} {\rm{
}}\frac{1}{{\Gamma \left( \alpha \right)}}\int\limits_a^b {\left({b - s} \right)^{\alpha  - 1}q\left( s \right) f\left(u(s)\right)} ds\\&-{\rm{
}}\frac{1}{{\Gamma \left( \alpha \right)}}\int\limits_a^t {\left({t - s} \right)^{\alpha  - 1}q\left( s \right) f\left(u(s)\right)} ds,\, a<t<b.\end{align*}
Whereupon \begin{align*}u(t)&=\frac{1}{{\Gamma \left( \alpha \right)}}\int\limits_a^t {\left(\left(\frac{t-a}{b-a}\right)^{\gamma-1}\left({b - s} \right)^{\alpha  - 1}-\left({t - s} \right)^{\alpha  - 1}\right)q\left( s \right) f\left(u(s)\right)} ds\\&+\frac{1}{{\Gamma \left( \alpha \right)}}\int\limits_t^b \left(\frac{t-a}{b-a}\right)^{\gamma-1}{\left({b - s} \right)^{\alpha  - 1}q\left( s \right) f\left(u(s)\right)} ds,\, a<t<b.\end{align*} This ends the proof.
\end{proof}
\begin{lemma}\label{lm2} Let Green's function $G(t,s)$ be defined as in Lemma \ref{lm1}. Then the Green function $G$ satisfies the following conditions:

\vspace{2mm}

(i) $G(t,s)\geq 0$ for all $a\leq t,s\leq b;$

\vspace{2mm}

(ii) $\max\limits_{a\leq t\leq b} G(t,s) = G(s,s),\,s\in[a,b];$

\vspace{2mm}

(iii) $G(s,s)$ has a unique maximum, given by

\begin{equation}\label{14}\max_{a\leq s\leq b} G(s,s) =\frac{(\alpha-1)^{\alpha-1}}{(\gamma+\alpha-2)^{\gamma+\alpha-2}} \frac{\left((\gamma-1)b-(\alpha-1)a\right)^{\gamma-1}}{\Gamma(\alpha)(b-a)^{\gamma-\alpha}}.
\end{equation}
\end{lemma}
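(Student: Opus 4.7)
The plan is to analyze the piecewise Green's function by fixing $s\in[a,b]$ and treating $t$ as the variable, handling the branches $t\le s$ and $t\ge s$ separately. On the diagonal $t=s$ the two formulas agree, so $G(\cdot,s)$ is continuous in $t$.

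For (i), the lower triangle $a\le t\le s\le b$ gives $G(t,s)$ as a product of nonnegative factors, hence $G(t,s)\ge 0$ immediately. For the upper triangle $a\le s\le t\le b$ I would reduce positivity to the inequality
\[
(t-s)^{\alpha-1}(b-a)^{\gamma-1}\le (t-a)^{\gamma-1}(b-s)^{\alpha-1},
\]
whose proof rests on the elementary estimate $(t-s)(b-a)\le(t-a)(b-s)$, equivalent to $(s-a)(b-t)\ge 0$, combined with $\alpha\le\gamma$ and $\tfrac{t-a}{b-a}\le 1$. Raising the basic estimate to a suitable power and exploiting the ordering of the exponents is the delicate point here.

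For (ii), fix $s$. On $[a,s]$ the formula $\bigl(\tfrac{t-a}{b-a}\bigr)^{\gamma-1}\tfrac{(b-s)^{\alpha-1}}{\Gamma(\alpha)}$ is manifestly increasing in $t$, with maximum $G(s,s)$ attained at $t=s$. On $[s,b]$ I would compute
\[
\frac{\partial G}{\partial t}(t,s)=\frac{(\gamma-1)(t-a)^{\gamma-2}(b-s)^{\alpha-1}}{(b-a)^{\gamma-1}\Gamma(\alpha)}-\frac{(\alpha-1)(t-s)^{\alpha-2}}{\Gamma(\alpha)}
\]
and verify $\partial_t G\le 0$, so that $G(\cdot,s)$ decreases on $[s,b]$. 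Combined with continuity at $t=s$, this yields $\max_{t\in[a,b]}G(t,s)=G(s,s)$.

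For (iii), the one-variable function $\psi(s):=G(s,s)=(s-a)^{\gamma-1}(b-s)^{\alpha-1}/[(b-a)^{\gamma-1}\Gamma(\alpha)]$ is smooth on $(a,b)$ and vanishes at the endpoints, so its maximum is attained at an interior critical point. The first-order condition $\psi'(s)=0$ reduces to $(\gamma-1)(b-s)=(\alpha-1)(s-a)$, giving
\[
s^{*}=\frac{(\gamma-1)b+(\alpha-1)a}{\gamma+\alpha-2},\quad s^{*}-a=\frac{(\gamma-1)(b-a)}{\gamma+\alpha-2},\quad b-s^{*}=\frac{(\alpha-1)(b-a)}{\gamma+\alpha-2}.
\]
Substituting back into $\psi$ and collecting powers of $(\gamma-1)$, $(\alpha-1)$, $(b-a)$, and $(\gamma+\alpha-2)$ produces the claimed expression for the maximum. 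The main obstacle lies in (i) and (ii), which both hinge on a pointwise inequality between expressions with different exponents $\alpha-1$ and $\gamma-1$; in both cases the argument uses the same basic comparison $(t-s)(b-a)\le (t-a)(b-s)$ together with $\alpha\le\gamma$, while the calculation in (iii) is a routine one-variable optimization.
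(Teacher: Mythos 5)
There is a genuine gap, and it sits exactly where you flagged the ``delicate point.'' For (i) you want
$(t-s)^{\alpha-1}(b-a)^{\gamma-1}\le (t-a)^{\gamma-1}(b-s)^{\alpha-1}$, and you propose to get it from $(t-s)(b-a)\le(t-a)(b-s)$ together with $\alpha\le\gamma$ and $\tfrac{t-a}{b-a}\le1$. Raising the basic estimate to the power $\alpha-1$ gives $(t-s)^{\alpha-1}\le\bigl(\tfrac{t-a}{b-a}\bigr)^{\alpha-1}(b-s)^{\alpha-1}$, but since $\tfrac{t-a}{b-a}\le1$ and $\gamma-1\ge\alpha-1$ we have $\bigl(\tfrac{t-a}{b-a}\bigr)^{\alpha-1}\ge\bigl(\tfrac{t-a}{b-a}\bigr)^{\gamma-1}$ --- the exponent comparison points the \emph{wrong} way, so the chain cannot be closed. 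Worse, the target inequality is simply false when $\gamma>\alpha$: with $a=0$, $b=1$, $\alpha=1.1$, $\gamma=1.9$, $t=\tfrac12$, $s=\tfrac14$ one gets $\Gamma(\alpha)G(t,s)=(0.5)^{0.9}(0.75)^{0.1}-(0.25)^{0.1}\approx 0.52-0.87<0$. (The paper's own proof of (i) ends with the lower bound $(b-s)^{\alpha-1}\bigl[\bigl(\tfrac{t-a}{b-a}\bigr)^{\gamma-1}-\bigl(\tfrac{t-a}{b-a}\bigr)^{\alpha-1}\bigr]$, which is nonpositive for the same reason, so this is a defect of the lemma itself for $\gamma>\alpha$; the argument is sound only in the Riemann--Liouville case $\gamma=\alpha$.)

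Part (ii) of your plan also fails for $\gamma>\alpha$: at $t=b$ your derivative equals $(b-s)^{\alpha-2}\bigl[(\gamma-1)\tfrac{b-s}{b-a}-(\alpha-1)\bigr]/\Gamma(\alpha)$, which is strictly positive whenever $\tfrac{b-s}{b-a}>\tfrac{\alpha-1}{\gamma-1}$, so $G(\cdot,s)$ is not decreasing on $[s,b]$ (in the numerical example above it dips below zero and climbs back to $0$ at $t=b$). Note the paper takes a different route here, differentiating $G_+$ in $s$ rather than $t$ to show $G_+(t,\cdot)$ is increasing on $[a,t]$; that computation is valid, though it only yields $G(t,s)\le G(t,t)$, not literally $\max_{t}G(t,s)=G(s,s)$. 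Your part (iii) is the paper's argument and the critical point $s^*$ is right, but carrying the substitution through gives $\max_sG(s,s)=\frac{(\gamma-1)^{\gamma-1}(\alpha-1)^{\alpha-1}(b-a)^{\alpha-1}}{(\gamma+\alpha-2)^{\gamma+\alpha-2}\Gamma(\alpha)}$, which agrees with the displayed formula \eqref{14} only when $a=0$ or $\gamma=\alpha$; so it does not ``produce the claimed expression'' in general, and you should not assert that it does without checking the algebra.
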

\begin{proof}We start by defining two functions $$G_+(t,s)=\left(\frac{t-a}{b-a}\right)^{\gamma-1}\left({b - s} \right)^{\alpha  - 1}-\left({t - s} \right)^{\alpha  - 1},\,\,a\leq s\leq t\leq b$$ and $$G_-(t,s)=\left(\frac{t-a}{b-a}\right)^{\gamma-1}\left({b - s} \right)^{\alpha  - 1},\,\,a\leq t\leq s\leq b.$$ It is clear that $$G_-(t,s)\geq 0,\quad \textrm{for all}\quad a\leq t\leq s\leq b.$$ Now, regarding the function $G_+(t,s),$ we have that
\begin{align*}G_+(t,s)&=\left(\frac{t-a}{b-a}\right)^{\gamma-1}\left({b - s} \right)^{\alpha  - 1}-\left({t - s} \right)^{\alpha  - 1}\\&=\left(\frac{t-a}{b-a}\right)^{\gamma-1}\left({b - s} \right)^{\alpha  - 1}\\&-\left(\frac{t-a}{b-a}\right)^{\alpha-1} \left(b-\left(a+\frac{(s-a)(b - a)}{t-a} \right) \right)^{\alpha  - 1}.
\end{align*} Observe now that $$a+\frac{(s-a)(b - a)}{t-a}\geq s,$$ then
\begin{align*}G_+(t,s)&\geq\left({b - s} \right)^{\alpha  - 1}\left[\left(\frac{t-a}{b-a}\right)^{\gamma-1}-\left(\frac{t-a}{b-a}\right)^{\alpha-1}\right],\end{align*} and therefore $G_+(t,s)\geq 0,$ which concludes the proof of (i).

The function $G_-(t,s)$ is an increasing function in $t$ and a decreasing function in $s.$ Then $$0\leq G_-(t,s)\leq G_-(s,s).$$ Using $$\left(\frac{t-a}{b-a}\right)^{\gamma-1}\leq 1,\quad \textrm{for}\quad a\leq t\leq s\leq b,$$ then
\begin{align*}\frac{\partial G_+}{\partial s}(t,s)&=(\alpha-1)\left((t-s)^{\alpha-2}- \left(\frac{t-a}{b-a}\right)^{\gamma-1}(b-s)^{\alpha-2}\right)\\& \geq(\alpha-1)\left((t-s)^{\alpha-2}-(b-s)^{\alpha-2}\right)\geq 0.
\end{align*} Hence, for a given $t,$ the function $G_+(t,s)$ is an increasing function of $s\in [a,t].$ Consequently, $$G_+(t,s)\leq G_+(t,t),$$ which concludes the proof of (ii).

Finally, let $$f(s)=G(s,s)=\left(\frac{s-a}{b-a}\right)^{\gamma-1}\left({b - s} \right)^{\alpha  - 1},\,s\in[a,b].$$ Now, one can verify that \begin{align*}f'(s)&=G'(s,s)\\&=(\gamma-1)(b-a)\left(\frac{s-a}{b-a}\right)^{\gamma-2}\left({b - s} \right)^{\alpha  - 1}\\&-(\alpha-1)\left(\frac{s-a}{b-a}\right)^{\gamma-1}\left({b - s} \right)^{\alpha  - 2}.\end{align*} Observe that $f'(s)$ has a unique zero, attained at the point
$$s=s^*=\frac{(\gamma-1)b+(\alpha-1)a}{\gamma+\alpha-2}.$$ Since, $f''(s^*)\leq 0$ (it is easy to check, for example, in Maple), we conclude that
\begin{align*}\max\limits_{s\in[a,b]}f(s)& =f(s^*)\\&=\frac{(\alpha-1)^{\alpha-1}}{(\gamma+\alpha-2)^{\gamma+\alpha-2}} \frac{\left((\gamma-1)b-(\alpha-1)a\right)^{\gamma-1}}{(b-a)^{\gamma-\alpha}}.\end{align*} This gives $$\max\limits_{s\in[a,b]}G(s,s)=\frac{(\alpha-1)^{\alpha-1}}{(\gamma+\alpha-2)^{\gamma+\alpha-2}} \frac{\left((\gamma-1)b-(\alpha-1)a\right)^{\gamma-1}}{\Gamma(\alpha)(b-a)^{\gamma-\alpha}}.$$ This completes the proof of the Lemma.
\end{proof}

\begin{theorem}\label{th1} If the fractional boundary value problem \begin{equation}\label{8} \left\{\begin{array}{l}D^{\alpha, \mu}_{a}u(t)+q(t)u(t)=0, \,a<t<b,\\{}\\u(a)=u(b)=0,\end{array}\right.\end{equation} has a nontrivial solution, where q is a real and continuous function, then \begin{equation}\label{9} \int\limits_a^b|q(s)|ds> \frac{(\gamma+\alpha-2)^{\gamma+\alpha-2}}{(\alpha-1)^{\alpha-1}} \frac{\Gamma(\alpha)(b-a)^{\gamma-\alpha}}{\left((\gamma-1)b-(\alpha-1)a\right)^{\gamma-1}}.\end{equation}
\end{theorem}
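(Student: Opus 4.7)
The plan is to reduce the differential inequality to an integral one via the Green function and then exploit the sharp bound on $G(s,s)$ established in Lemma \ref{lm2}(iii).

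First I would specialize Lemma \ref{lm1} to the linear case $f(u)=u$. Any nontrivial solution $u\in C[a,b]$ of \eqref{8} then satisfies
\begin{equation*}
u(t)=\int_a^b G(t,s)\,q(s)\,u(s)\,ds, \qquad a\leq t\leq b,
\end{equation*}
with $G$ given by \eqref{Green}. Working in the Banach space $C[a,b]$ endowed with $\|u\|=\max_{a\leq t\leq b}|u(t)|$, I would take absolute values, use the nonnegativity of $G$ from Lemma \ref{lm2}(i), and pull out $\|u\|$ to get
\begin{equation*}
|u(t)|\leq \|u\|\int_a^b G(t,s)\,|q(s)|\,ds.
\end{equation*}

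Next, invoking Lemma \ref{lm2}(ii), I would replace $G(t,s)$ by its $t$-maximum $G(s,s)$, yielding
\begin{equation*}
|u(t)|\leq \|u\|\int_a^b G(s,s)\,|q(s)|\,ds,\qquad t\in[a,b].
\end{equation*}
Taking the supremum over $t$ and dividing by $\|u\|>0$ (which is legitimate since $u$ is nontrivial, hence $\|u\|\neq 0$ by continuity and the boundary conditions only forcing vanishing at the endpoints) gives
\begin{equation*}
1\leq \int_a^b G(s,s)\,|q(s)|\,ds.
\end{equation*}
Then I would apply Lemma \ref{lm2}(iii) to bound $G(s,s)$ by its maximum \eqref{14}, which after factoring out and rearranging delivers exactly the constant appearing on the right-hand side of \eqref{9}.

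The one subtle point is the strict inequality. Equality in the chain above would force $G(t,s)=G(s,s)$ and $G(s,s)=\max_\sigma G(\sigma,\sigma)$ for almost every $s$ in the support of $q$, as well as $|u(s)|=\|u\|$ there. Since the $t$-maximum of $G(\cdot,s)$ is attained only at $t=s$ (by the strict monotonicities in the proof of Lemma \ref{lm2}) and the maximum of $G(s,s)$ is attained at the single point $s^\ast$ described there, these conditions cannot all hold for a nontrivial continuous $u$ vanishing at both endpoints. That observation upgrades the weak inequality to the strict inequality \eqref{9} and is the part where most care is required; the rest of the argument is essentially a direct assembly of the two preceding lemmas.
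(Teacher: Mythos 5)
Your proposal follows essentially the same route as the paper: reduce \eqref{8} to the integral equation of Lemma \ref{lm1} with $f(u)=u$, bound $|u(t)|$ by $\|u\|\int_a^b G(s,s)|q(s)|\,ds$ using Lemma \ref{lm2}(i)--(ii), and then apply the maximum \eqref{14} from Lemma \ref{lm2}(iii) to extract the constant in \eqref{9}. Your closing discussion of why the inequality is strict is actually more careful than the paper, which passes from $1\leq C\int_a^b|q(s)|\,ds$ to the strict inequality without comment.
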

\begin{proof} We equip $C\left([a,b]\right)$ with the Chebyshev norm $$\|u\|=\sup\limits_{t\in [a,b]}|u|.$$ It follows from Lemma \ref{lm1} that a solution to the fractional boundary value problem \eqref{8} satisfies the integral equation \eqref{*}.
Hence, $$\|u\|\leq\max\limits_{t\in [a,b]}\int\limits_a^b \|u\||G(t,s)q(s)|ds,$$ or, equivalently, $$1\leq\max\limits_{t\in [a,b]}\int\limits_a^b |G(t,s)q(s)|ds.$$ Using now the properties of the Green function $G$ proved in Lemma \ref{lm2}, we get \begin{align*}1\leq\max\limits_{t\in [a,b]}\int\limits_a^b |G(t,s)q(s)|ds\\& \leq \max\limits_{t\in [a,b]}\int\limits_a^b |G(t,s)| |q(s)|ds\\&\leq \max\limits_{s\in [a,b]}\int\limits_a^b |G(s,s)| |q(s)|ds \\&\leq\frac{(\alpha-1)^{\alpha-1}}{(\gamma+\alpha-2)^{\gamma+\alpha-2}} \frac{\left((\gamma-1)b-(\alpha-1)a\right)^{\gamma-1}}{\Gamma(\alpha)(b-a)^{\gamma-\alpha}}\int\limits_a^b |q(s)|ds.\end{align*}
Hence
\begin{align*}1\leq\frac{(\alpha-1)^{\alpha-1}}{(\gamma+\alpha-2)^{\gamma+\alpha-2}} \frac{\left((\gamma-1)b-(\alpha-1)a\right)^{\gamma-1}}{\Gamma(\alpha)(b-a)^{\gamma-\alpha}}\int\limits_a^b |q(s)|ds,\end{align*} from which the inequality \eqref{9} follows.
\end{proof}
\begin{remark}Note that if we set $\alpha = 2$ and $\gamma=2$ in \eqref{9}, we obtain Lyapunov's classical inequality \eqref{2}.\end{remark}

\begin{remark}Note that if we set $\gamma=\alpha$ in \eqref{9}, we obtain Lyapunov-type inequality \eqref{4}.\end{remark}

We will end this work by presenting an application of Theorem \ref{th1}. More specifically, we will show how inequality \eqref{9} can be used to determine intervals for the real zeros of the Mittag-Leffler function: $$E_{\alpha, \delta}(z)=\sum\limits_{k=0}^{\infty}\frac{z^k}{\Gamma(\alpha k+\delta)},\,\,\delta, z\in \mathbb{C}\,\Re(\alpha)>0.$$

Let now $a = 0$ and $b = 1$ for simplicity and consider the following fractional Sturm--Liouville type eigenvalue problem: \begin{equation}\label{10} \left\{\begin{array}{l}D^{\alpha, \mu}_{a}u(t)+\lambda u(t)=0, \,0<t<1,\\{}\\u(0)=u(1)=0.\end{array}\right.\end{equation}

We also mention that analogous fractional Sturm--Liouville problem have been studied in \cite{TokmagambetovTorebek, KlimekAgrawal, RiveroTrujilloVelasko}.

\begin{corollary}Let $\lambda\in \mathbb{R}$ be an eigenvalue of problem \eqref{10}. Then $$|\lambda|> \frac{(\gamma+\alpha-2)^{\gamma+\alpha-2}}{(\alpha-1)^{\alpha-1}} \frac{\Gamma(\alpha)}{\left(\gamma-1\right)^{\gamma-1}}.$$\end{corollary}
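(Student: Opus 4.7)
The plan is a direct application of Theorem \ref{th1} to the eigenvalue problem \eqref{10}. If $\lambda\in\mathbb{R}$ is an eigenvalue, then by definition there exists a nontrivial function $u$ satisfying \eqref{10}, which is precisely problem \eqref{8} with the specific choice $q(t)\equiv\lambda$, $a=0$, and $b=1$. Since $q$ so defined is certainly real and continuous, Theorem \ref{th1} applies and yields the necessary condition
\[
\int_0^1 |q(s)|\,ds \;>\; \frac{(\gamma+\alpha-2)^{\gamma+\alpha-2}}{(\alpha-1)^{\alpha-1}}\,\frac{\Gamma(\alpha)\,(b-a)^{\gamma-\alpha}}{\bigl((\gamma-1)b-(\alpha-1)a\bigr)^{\gamma-1}}.
\]

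Next I would simply substitute the particular values. On the left, $\int_0^1 |\lambda|\,ds = |\lambda|$. On the right, the factor $(b-a)^{\gamma-\alpha}$ becomes $1$, and the denominator $\bigl((\gamma-1)b-(\alpha-1)a\bigr)^{\gamma-1}$ collapses to $(\gamma-1)^{\gamma-1}$. Putting these together gives exactly
\[
|\lambda| \;>\; \frac{(\gamma+\alpha-2)^{\gamma+\alpha-2}}{(\alpha-1)^{\alpha-1}}\,\frac{\Gamma(\alpha)}{(\gamma-1)^{\gamma-1}},
\]
as claimed.

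There is essentially no obstacle: the corollary is a specialization, not a genuinely new estimate. The only point worth flagging is the implicit assumption that an eigenvalue of \eqref{10} in the sense used here coincides with the existence of a nontrivial solution of the associated Dirichlet problem, so that Theorem \ref{th1} is indeed applicable; once this is observed, the result reduces to arithmetic.
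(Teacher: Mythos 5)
Your proposal is correct and is exactly the intended argument: the paper states this corollary without proof as an immediate specialization of Theorem \ref{th1} with $q(t)\equiv\lambda$, $a=0$, $b=1$, and your substitution reproduces the stated bound. Nothing further is needed.
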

\begin{corollary}If $$|\lambda|\leq \frac{(\gamma+\alpha-2)^{\gamma+\alpha-2}}{(\alpha-1)^{\alpha-1}} \frac{\Gamma(\alpha)}{\left(\gamma-1\right)^{\gamma-1}},$$ then the system of eigenfunctions $$u(\lambda, t)=t^{\gamma-1}E_{\alpha, \gamma}\left(-\lambda t^{\alpha}\right)$$ of eigenvalue problem \eqref{10} has no real zeros.\end{corollary}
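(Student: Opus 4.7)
The plan is to establish the corollary by contraposition: if $u(\lambda,\cdot)$ has a real zero in $(0,1]$, then $|\lambda|$ must exceed the stated bound, contradicting the hypothesis. The starting point is the identity
\[
D_0^{\alpha,\gamma}\bigl[t^{\gamma-1}E_{\alpha,\gamma}(-\lambda t^\alpha)\bigr]+\lambda\, t^{\gamma-1}E_{\alpha,\gamma}(-\lambda t^\alpha)=0,
\]
which follows from Property \ref{pr4} applied termwise to the defining series $E_{\alpha,\gamma}(z)=\sum_{k\geq 0}z^k/\Gamma(\alpha k+\gamma)$: each monomial $t^{\alpha k+\gamma-1}$ is mapped by $D_0^{\alpha,\gamma}$ to $\Gamma(\alpha k+\gamma)/\Gamma(\alpha k+\gamma-\alpha)\cdot t^{\alpha k+\gamma-\alpha-1}$, and a shift of summation index reassembles the resulting series into $-\lambda\, t^{\gamma-1}E_{\alpha,\gamma}(-\lambda t^\alpha)$. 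Uniform convergence of the Mittag--Leffler series on compact subsets of $(0,\infty)$ legitimizes the interchange of the derivative with the sum.

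Next, suppose for a contradiction that $u(\lambda,t_0)=0$ for some $t_0\in(0,1]$. The boundary value $u(\lambda,0)=0$ holds automatically because of the factor $t^{\gamma-1}$ together with $\gamma>1$, so the restriction of $u(\lambda,\cdot)$ to $[0,t_0]$ is a nontrivial solution of problem \eqref{8} on that interval with $a=0$, $b=t_0$, and $q(t)\equiv\lambda$. Theorem \ref{th1} then produces
\[
|\lambda|\,t_0=\int_0^{t_0}|\lambda|\,ds>\frac{(\gamma+\alpha-2)^{\gamma+\alpha-2}}{(\alpha-1)^{\alpha-1}}\cdot\frac{\Gamma(\alpha)\,t_0^{\gamma-\alpha}}{\bigl((\gamma-1)\,t_0\bigr)^{\gamma-1}},
\]
which simplifies to $|\lambda|\,t_0^{\alpha}>C$, where $C$ denotes the constant on the right-hand side of the hypothesis. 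Since $t_0\leq 1$ forces $t_0^{\alpha}\leq 1$, this yields $|\lambda|>C$, contradicting the assumption $|\lambda|\leq C$. Hence no such $t_0$ can exist, and $u(\lambda,\cdot)$ is zero-free on $(0,1]$.

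The bookkeeping is routine once Theorem \ref{th1} is in hand; the one place that deserves care is the verification that the explicit function $t^{\gamma-1}E_{\alpha,\gamma}(-\lambda t^\alpha)$ genuinely solves $D_0^{\alpha,\gamma}u+\lambda u=0$, since this requires the termwise application of Property \ref{pr4} together with uniform convergence. Everything else reduces to substituting the interval $[0,t_0]$ into Theorem \ref{th1} and exploiting the monotonicity of the right-hand side in $t_0$ to collapse the estimate to the claimed bound $C$.
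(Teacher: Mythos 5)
Your argument is correct in substance and, in fact, does more than the paper, which states this corollary without proof as an apparent consequence of the preceding eigenvalue bound. That preceding corollary (Theorem \ref{th1} applied with $a=0$, $b=1$, $q\equiv\lambda$) only rules out a zero at $t_0=1$, i.e.\ it only shows $\lambda$ is not an eigenvalue. To exclude zeros at interior points $t_0\in(0,1)$ one genuinely needs your subinterval argument: restrict to $[0,t_0]$ (legitimate because $D_0^{\alpha,\gamma}$ is causal, so the equation on $(0,1)$ restricts to $(0,t_0)$, and $u(\lambda,0)=0$ from the factor $t^{\gamma-1}$ with $\gamma>1$), apply Theorem \ref{th1} with $b=t_0$, and observe that the right-hand side scales as $C\,t_0^{1-\alpha}$, so that $|\lambda|\,t_0^{\alpha}>C$ and hence $|\lambda|>C$ since $t_0\le 1$. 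Your computation of the scaling is correct, and nontriviality of the restricted solution is clear from $E_{\alpha,\gamma}(0)=1/\Gamma(\gamma)\neq 0$.

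One wrinkle deserves attention in your verification that $u(\lambda,t)=t^{\gamma-1}E_{\alpha,\gamma}(-\lambda t^{\alpha})$ solves the equation. Applying Property \ref{pr4} literally to the $k=0$ monomial $t^{\gamma-1}$ yields $\frac{\Gamma(\gamma)}{\Gamma(\gamma-\alpha)}t^{\gamma-\alpha-1}$, which is \emph{not} zero when $\gamma>\alpha$ (then $\gamma-\alpha\in(0,1)$ and $\Gamma(\gamma-\alpha)$ is finite), so the index shift you describe would leave a spurious leftover term $\frac{t^{\gamma-\alpha-1}}{\Gamma(\gamma-\alpha)}$. The correct statement is that $t^{\gamma-1}$ lies in the kernel of $D_0^{\alpha,\gamma}$: directly, $I_0^{2-\gamma}t^{\gamma-1}=\Gamma(\gamma)\,t$, whose second derivative vanishes, so $D_0^{\alpha,\gamma}t^{\gamma-1}=0$. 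With the $k=0$ term annihilated for this reason (and Property \ref{pr4} applied only for $k\ge 1$, where $\nu=\alpha k+\gamma>\gamma$), the series does reassemble into $-\lambda u$. This is the same imprecision present in the paper's statement of Property \ref{pr4}, but your write-up should not rely on the formula at $\nu=\gamma$.
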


\begin{remark}Note that if we set $\alpha = 2,$ $\gamma=2$ in \eqref{10}, we obtain the classical Sturm-Liouville eigenvalue problem $$u''(t)+\lambda u(t)=0,\,t\in(0,1),$$ $$u(0)=u(1)=0.$$ Then $\lambda=(\pi k)^2,\,k\in \mathbb{N}.$ Hence it is obvious that Sturm-Liouville problem has no nontrivial eigenfunctions if $\lambda\in[-2,2].$\end{remark}

\begin{corollary}If $$|\lambda|\leq \frac{(\gamma+\alpha-2)^{\gamma+\alpha-2}}{(\alpha-1)^{\alpha-1}} \frac{\Gamma(\alpha)}{\left(\gamma-1\right)^{\gamma-1}},$$ then the problem \eqref{10} has no nontrivial solutions in the class of real functions.\end{corollary}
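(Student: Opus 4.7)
The plan is to obtain this corollary as an immediate consequence of Theorem \ref{th1}, by taking $q$ to be the constant function $\lambda$ on the interval $[0,1]$. In fact the statement is precisely the contrapositive of the first corollary in this section (with the same bound), so the work has already been done; all that remains is to record the specialization carefully.

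First I would apply Theorem \ref{th1} to problem \eqref{10}, interpreting it as a special case of \eqref{8} with $a=0$, $b=1$, and $q(t)\equiv \lambda$. The hypothesis that $q$ is real and continuous is satisfied trivially. Then I would substitute these values into the Lyapunov-type inequality \eqref{9}: the left-hand side $\int_0^1 |q(s)|\,ds$ collapses to $|\lambda|$, while on the right-hand side one has $b-a = 1$, so $(b-a)^{\gamma-\alpha}=1$, and $(\gamma-1)b - (\alpha-1)a = \gamma-1$, so the numerator of the second factor reduces to $(\gamma-1)^{\gamma-1}$. The conclusion of Theorem \ref{th1} thus simplifies to
\[
|\lambda| > \frac{(\gamma+\alpha-2)^{\gamma+\alpha-2}}{(\alpha-1)^{\alpha-1}}\,\frac{\Gamma(\alpha)}{(\gamma-1)^{\gamma-1}},
\]
whenever \eqref{10} has a nontrivial solution.

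Finally I would take the contrapositive: if the reverse inequality
\[
|\lambda| \leq \frac{(\gamma+\alpha-2)^{\gamma+\alpha-2}}{(\alpha-1)^{\alpha-1}}\,\frac{\Gamma(\alpha)}{(\gamma-1)^{\gamma-1}}
\]
holds, then problem \eqref{10} admits no nontrivial real solution, which is exactly the claim. There is no genuine obstacle here; the only thing to be careful about is that Theorem \ref{th1} is stated with strict inequality, so its contrapositive correctly allows equality in the bound on $|\lambda|$, matching the statement of the corollary.
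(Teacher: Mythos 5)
Your proposal is correct and is exactly the route the paper intends: the corollary is the contrapositive of Theorem \ref{th1} specialized to problem \eqref{10} with $a=0$, $b=1$, $q(t)\equiv\lambda$, under which \eqref{9} reduces to the stated bound on $|\lambda|$. The remark about strict versus non-strict inequality is handled correctly.
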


\begin{theorem}\label{th2}The eigenvalue problem \eqref{10} has an infinite number of eigenvalues, and they are the roots of the Mittag-Leffler function $E_{\alpha, \gamma}\left(-\lambda \right),$ i.e. the eigenvalues satisfy \begin{equation}\label{12}E_{\alpha, \gamma}\left(-\lambda \right)=0.\end{equation}
\end{theorem}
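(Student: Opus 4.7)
The plan is to construct the general solution of $D_{0}^{\alpha,\gamma}u+\lambda u=0$ that meets the condition $u(0)=0$, then impose $u(1)=0$ and read off the eigenvalue equation; the existence of infinitely many real eigenvalues is then deduced from the zero-distribution of the resulting transcendental function.

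First I would apply $I_{0}^{\alpha}$ to the differential equation and invoke Property \ref{pr3} (with $m=2$, $a=0$) to recast it as the Volterra-type equation
\begin{equation*}
u(t)=c_{1}\frac{t^{\gamma-1}}{\Gamma(\gamma)}+c_{2}\frac{t^{\gamma-2}}{\Gamma(\gamma-1)}-\lambda\, I_{0}^{\alpha}u(t),
\end{equation*}
where $c_{1}$ and $c_{2}$ are free constants determined by the boundary data at $0$. Because $\gamma\in(1,2)$ the term $t^{\gamma-2}$ is singular at the origin, so the condition $u(0)=0$ forces $c_{2}=0$. Solving the reduced integral equation by Picard iteration and using $I_{0}^{\alpha}(t^{\nu-1})=\Gamma(\nu)\,t^{\nu+\alpha-1}/\Gamma(\nu+\alpha)$ at each step, I would obtain
\begin{equation*}
u(\lambda,t)=\frac{c_{1}}{\Gamma(\gamma)}\sum_{k=0}^{\infty}(-\lambda)^{k}\, I_{0}^{k\alpha}t^{\gamma-1}
=c_{1}\,t^{\gamma-1}\sum_{k=0}^{\infty}\frac{(-\lambda t^{\alpha})^{k}}{\Gamma(\alpha k+\gamma)}
=c_{1}\,t^{\gamma-1}E_{\alpha,\gamma}(-\lambda t^{\alpha}),
\end{equation*}
which is precisely the eigenfunction named in the preceding corollary. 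A direct consistency check follows termwise from Property \ref{pr4}, together with the observation (immediate from Definition \ref{def5}) that $t^{\gamma-1}$ lies in the kernel of $D_{0}^{\alpha,\gamma}$, since $I_{0}^{2-\gamma}t^{\gamma-1}$ is linear in $t$; a single index shift $k\mapsto k+1$ in the resulting series then yields $D_{0}^{\alpha,\gamma}u(\lambda,\cdot)=-\lambda u(\lambda,\cdot)$.

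With the eigenfunction in closed form, the second boundary condition $u(\lambda,1)=0$ becomes $c_{1}E_{\alpha,\gamma}(-\lambda)=0$, and nontriviality forces $c_{1}\ne 0$; consequently the eigenvalues are exactly the roots of $E_{\alpha,\gamma}(-\lambda)=0$, which is the identity \eqref{12}. The main obstacle is the second assertion, namely that there are infinitely many real solutions, which cannot be extracted from the earlier corollaries (they only rule out a neighbourhood of zero). I would settle it by appealing to the classical theory of zeros of Mittag-Leffler functions: for $1<\alpha<2$ and $\gamma>0$, the entire function $E_{\alpha,\gamma}$ possesses infinitely many negative real zeros accumulating only at $-\infty$. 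This follows from the oscillatory asymptotic expansion of $E_{\alpha,\gamma}(-x)$ as $x\to+\infty$, whose leading trigonometric term changes sign infinitely often; citing (or briefly reproducing) that asymptotic is the principal technical input, and with it the theorem is complete.
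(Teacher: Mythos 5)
Your reduction of the problem to the transcendental equation $E_{\alpha,\gamma}(-\lambda)=0$ is correct and is essentially the paper's route: the paper quotes the representation $u(t)=c_1t^{\gamma-2}E_{\alpha,\gamma-1}(-\lambda t^{\alpha})+c_2t^{\gamma-1}E_{\alpha,\gamma}(-\lambda t^{\alpha})$ from \cite{ShinaliyevTurmetovUmarov} and Property \ref{pr4}, whereas you rederive it by applying $I_0^{\alpha}$, using Property \ref{pr3}, and iterating the resulting Volterra equation. That derivation is sound (including the observation that $t^{\gamma-1}\in\ker D_0^{\alpha,\gamma}$, which is why the naive application of Property \ref{pr4} to the $k=0$ term gives $0$), and the conclusion $c_2=0$, $c_1E_{\alpha,\gamma}(-\lambda)=0$ matches the paper exactly.

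The genuine gap is in your final step. For $1<\alpha<2$ the asymptotic expansion of $E_{\alpha,\gamma}(-x)$ as $x\to+\infty$ is
\begin{equation*}
E_{\alpha,\gamma}(-x)=\frac{2}{\alpha}\,x^{(1-\gamma)/\alpha}e^{x^{1/\alpha}\cos(\pi/\alpha)}\cos\!\Bigl(x^{1/\alpha}\sin(\pi/\alpha)+\tfrac{\pi(1-\gamma)}{\alpha}\Bigr)+\frac{1}{\Gamma(\gamma-\alpha)}\,\frac{1}{x}+O\!\left(\frac{1}{x^{2}}\right),
\end{equation*}
and since $\pi/\alpha\in(\pi/2,\pi)$ we have $\cos(\pi/\alpha)<0$: the trigonometric term you want to rely on is \emph{exponentially damped} and is dominated by the monotone algebraic tail. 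Hence $E_{\alpha,\gamma}(-x)$ has eventually constant sign and only \emph{finitely many} negative real zeros; your claim that it has infinitely many real zeros accumulating at $-\infty$ is true only in the limiting case $\alpha=2$ (where $\cos(\pi/\alpha)=0$ and one recovers $\cos\sqrt{x}$, $\sin\sqrt{x}/\sqrt{x}$, etc.), which is excluded here. This is precisely the content of the cited results of Pskhu and Popov--Sedletskii: for $1<\alpha<2$ the large zeros of $E_{\alpha,\gamma}$ sit near the rays $\arg z=\pm\alpha\pi/2\neq\pi$. The infinitude of eigenvalues must therefore be obtained in the complex plane: $E_{\alpha,\gamma}(-\lambda)$ is entire of order $1/\alpha\in(1/2,1)$, a non-integer, so by the Hadamard factorization theorem it cannot have finitely many zeros. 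Replacing your asymptotic argument by this order argument (or by a direct citation of the zero-distribution results) closes the gap; note that the paper's own proof is also only a citation plus numerical plots at this point, so you are not missing an argument that the paper supplies, but the specific justification you propose would fail.
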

\begin{proof}Using the results in \cite{ShinaliyevTurmetovUmarov} and by property \ref{pr4}, a solution of fractional differential equation in \eqref{10} reads \begin{multline}\label{11}u(t)=c_1 t^{\gamma-2}E_{\alpha, \gamma-1}\left(-\lambda t^{\alpha}\right)+c_2 t^{\gamma-1}E_{\alpha, \gamma}\left(-\lambda t^{\alpha}\right)\\=c_1\sum\limits_{k=0}^{\infty}(-\lambda)^k\frac{t^{\alpha k+\gamma-2}}{\Gamma(\alpha k+\gamma-1)}+c_2\sum\limits_{k=0}^{\infty}(-\lambda)^k\frac{t^{\alpha k+\gamma-1}}{\Gamma(\alpha k+\gamma)},
\end{multline} where $c_1$ and $c_2$ are the real constants.\\
From $u(0)=0$ we obtain $c_1=0.$ Since $u(1)=0,$ we get $c_2 E_{\alpha, \gamma}\left(-\lambda \right)=0.$

According to well-known results (see. for example \cite{Pskhu, PopovSedletskii}) Mittag-Leffler type function $E_{\alpha, \gamma}\left(-\lambda \right)$ can has real roots. For more accurate results we give the picture of function $E_{\alpha, \gamma}\left(-\lambda \right)$ (at $\gamma=2$) for particular values of $3/2\leq\alpha< 2$ and $\lambda$ (see. Figure \ref{fig1}):

\begin{figure}[h!]
  \includegraphics[width=0.44\linewidth]{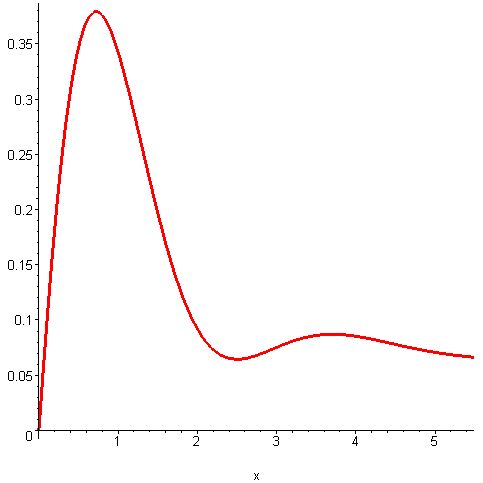}
  \includegraphics[width=0.44\linewidth]{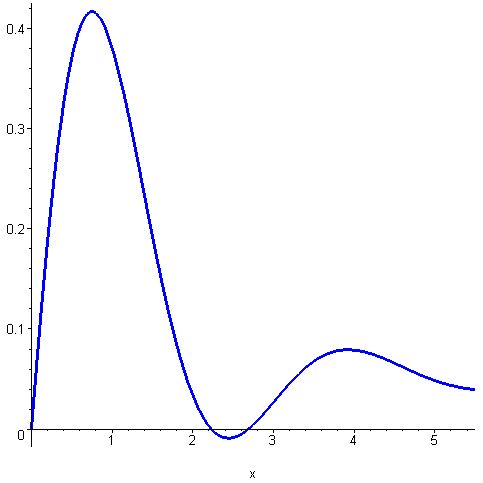}
  \includegraphics[width=0.44\linewidth]{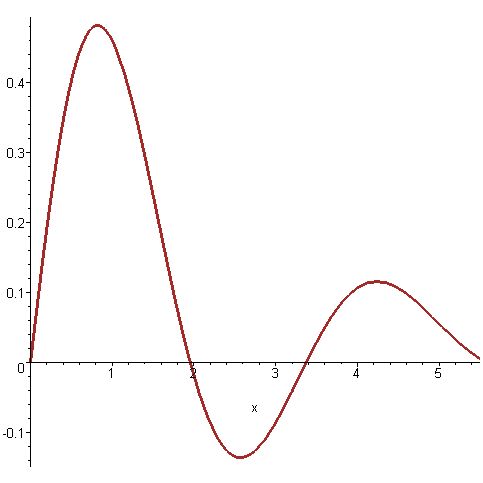}
  \includegraphics[width=0.44\linewidth]{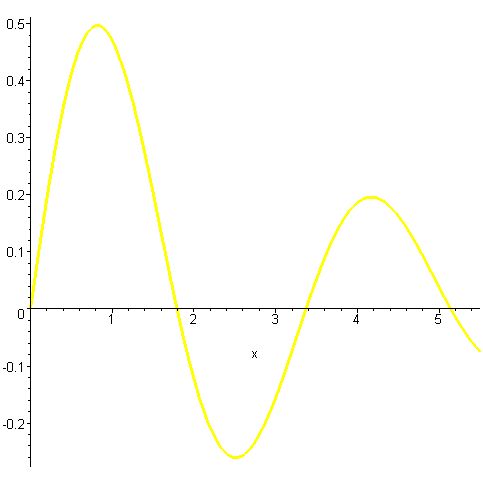}
  \ \ \caption{}
  \label{fig1}
\end{figure}
Eigenfunctions of the problem \eqref{10} has the form $$u(\lambda, t)=t^{\gamma-1}E_{\alpha, \gamma}\left(-\lambda t^{\alpha}\right),$$ where $\lambda$ are the roots of the Mittag-Leffler function $E_{\alpha, \gamma}\left(-\lambda \right).$
\end{proof}

\section{Existence of positive solutions of problem \eqref{7}}\label{Sec4}

\begin{definition} Let $X$ be a real Banach space. A nonempty closed convex set $K\subset X$ is called a cone if it satisfies the following two conditions:

\vspace{2mm}

(i) $x\in K,$ $\lambda\geq 0,$ implies $\lambda x\in K;$

\vspace{2mm}

(ii) $x\in K,$ $-x\in K,$ implies $x=0.$
\end{definition}

\begin{lemma}\label{lmGK}\cite{Krasnoselskii} (Krasnoselskii fixed point theorem). Let $X$ be a Banach space and let $K\subset X$ be a cone. Assume $\Omega_1$ and $\Omega_2$ are bounded open subsets of $X$ with $0\in \Omega_1\subset \overline{\Omega}_1\subset \Omega_2,$ and let $$T:\,K\cap \left(\overline{\Omega}_2\backslash \Omega_1\right) \rightarrow K$$ be a completely continuous operator such that

\vspace{2mm}

(i) $\|Tu\|\geq \|u\|$ for any $u\in K\cap \partial\Omega_1$ and $\|Tu\|\leq \|u\|$ for any $u\in K\cap \partial\Omega_2;$\\or

\vspace{2mm}

(ii) $\|Tu\|\leq \|u\|$ for any $u\in K\cap \partial\Omega_1$ and $\|Tu\|\geq \|u\|$ for any $u\in K\cap \partial\Omega_2.$

\vspace{2mm}

Then, the operator $T$ has a fixed point in $K\cap \left(\overline{\Omega}_2\backslash \Omega_1\right).$
\end{lemma}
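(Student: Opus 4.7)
The plan is to derive Krasnoselskii's compression/expansion theorem from the theory of the fixed point index for completely continuous self-maps on a cone. First, I would recall (or, if necessary, construct) the fixed point index $i(T,K\cap U,K)$ for any bounded open $U\subset X$ and any completely continuous $T:\overline{K\cap U}\to K$ without fixed points on the relative boundary $K\cap\partial U$; this index is built from the Leray--Schauder degree after extending $T$ to all of $X$ via a Dugundji-type retraction of $X$ onto $K$. It enjoys the four standard properties I will need: \emph{normalization}, \emph{additivity} over disjoint open pieces, \emph{homotopy invariance}, and the \emph{solution property} (nonzero index forces a fixed point). Throughout the argument I may assume $T$ has no fixed point on $K\cap\partial\Omega_1$ and $K\cap\partial\Omega_2$, since otherwise the conclusion is immediate.

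The core of the proof lies in two auxiliary index computations. \textbf{Claim 1 (compression gives index $1$).} If $\|Tu\|\le\|u\|$ on $K\cap\partial\Omega$ with no boundary fixed point, then $i(T,K\cap\Omega,K)=1$. I would prove this with the linear homotopy $H(s,u)=(1-s)Tu$ from $T$ to the zero map: a boundary fixed point $u=(1-s)Tu$ would yield $\|u\|\le(1-s)\|u\|$, forcing $s=0$ and $u=Tu$, contradicting the hypothesis. Homotopy invariance together with normalization at the zero map then gives index $1$. \textbf{Claim 2 (expansion gives index $0$).} If $\|Tu\|\ge\|u\|$ on $K\cap\partial\Omega$ with no boundary fixed point, then $i(T,K\cap\Omega,K)=0$. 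Fix $e\in K\setminus\{0\}$ and consider $H(s,u)=Tu+sMe$ for a sufficiently large constant $M$. One must verify (a) throughout $s\in[0,1]$ the equation $u=Tu+sMe$ has no solution on $K\cap\partial\Omega$, using the expansion bound $\|Tu\|\ge\|u\|$ and the fact that adding an element of $K$ can only increase the norm in the relevant direction; and (b) for $M$ chosen larger than $\sup\{\|Tu\|+\|u\|:u\in K\cap\overline{\Omega}\}$, the equation $u=Tu+Me$ has no solution in $K\cap\overline{\Omega}$ at all. Homotopy invariance then gives $i(T,K\cap\Omega,K)=i(H(1,\cdot),K\cap\Omega,K)=0$.

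Granted these two claims the theorem follows immediately. In case (i), Claim 2 applied on $\partial\Omega_1$ yields $i(T,K\cap\Omega_1,K)=0$, while Claim 1 applied on $\partial\Omega_2$ yields $i(T,K\cap\Omega_2,K)=1$. The additivity of the index applied to the disjoint decomposition $K\cap\Omega_2=(K\cap\Omega_1)\sqcup(K\cap(\Omega_2\setminus\overline{\Omega}_1))$ (which is valid since $T$ has no fixed point on $K\cap\partial\Omega_1$) gives
$$i\bigl(T,K\cap(\Omega_2\setminus\overline{\Omega}_1),K\bigr)=i(T,K\cap\Omega_2,K)-i(T,K\cap\Omega_1,K)=1\neq 0,$$
and the solution property produces the desired fixed point in $K\cap(\overline{\Omega}_2\setminus\Omega_1)$. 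Case (ii) is completely symmetric, with the roles of the two claims interchanged.

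The main obstacle in my view is Claim 2: one must guarantee that the shifted homotopy $Tu+sMe$ introduces no spurious boundary fixed points at intermediate values of $s$, and this is where the cone order and the choice of $e$ matter. A cleaner alternative is simply to cite the fixed point index framework from Deimling or Guo--Lakshmikantham, but even then the verification ultimately reduces to exactly these two index calculations; the delicate step (a) of Claim 2 is where the proof is most easily mishandled if one does not exploit the cone structure carefully.
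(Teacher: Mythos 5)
The paper itself gives no proof of this lemma: it is quoted verbatim from Krasnoselskii's book, so there is nothing on the paper's side to compare with, and your sketch has to be judged on its own. The skeleton you propose --- the fixed point index on the cone, an index-$1$ computation on the compression boundary, an index-$0$ computation on the expansion boundary, and additivity over $K\cap\Omega_2$ --- is indeed the standard route, and your Claim 1 is correct: a boundary solution of $u=(1-s)Tu$ gives $\|u\|\le(1-s)\|u\|$, and since $0\in\Omega$ forces $\|u\|\ge\mathrm{dist}(0,\partial\Omega)>0$, this pins $s=0$ and contradicts the absence of boundary fixed points. (One smaller point you should still handle: $T$ is only given on the annulus $K\cap\bigl(\overline{\Omega}_2\backslash\Omega_1\bigr)$, so before $i(T,K\cap\Omega_1,K)$ and $i(T,K\cap\Omega_2,K)$ even make sense you must extend $T$ to a completely continuous map of $K\cap\overline{\Omega}_2$ into $K$, e.g.\ by a Dugundji extension composed with the retraction onto $K$.)

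The genuine gap is step (a) of your Claim 2. The translated homotopy $H(s,u)=Tu+sMe$ is the correct tool for the \emph{order-theoretic} expansion hypothesis ($u-Tu\neq\lambda e$ for all $\lambda\ge 0$), but its admissibility does \emph{not} follow from the norm hypothesis $\|Tu\|\ge\|u\|$ that is assumed here: if $u=Tu+sMe$ on $K\cap\partial\Omega$, all you know is $\|Tu\|=\|u-sMe\|$, and your assertion that adding an element of $K$ can only increase the norm is false in general --- it is precisely monotonicity of the norm on the cone (normality with constant $1$), which fails already for the positive cone of $C[a,b]$ with the sup norm (where $\|x+y\|=\|x\|$ for many positive $x,y$) and fails strictly for non-monotone norms. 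So spurious boundary fixed points of $H(s,\cdot)$ at intermediate $s$ are not excluded and homotopy invariance cannot be invoked; Claim 2 as written does not prove the norm version of the theorem. The repair is to replace the translation by a \emph{radial} deformation: from $\|Tu\|\ge\|u\|\ge\mathrm{dist}(0,\partial\Omega)>0$ and $Tu\neq u$ on $K\cap\partial\Omega$ one gets $Tu\neq\mu u$ for all $\mu\in(0,1]$ together with $\inf_{u\in K\cap\partial\Omega}\|Tu\|>0$, and the index-zero lemma valid under exactly these hypotheses (Guo--Lakshmikantham, Lemma 2.3.3) uses the homotopy $H(s,u)=\bigl(1+s(\lambda_0-1)\bigr)Tu$, whose admissibility is immediate since a boundary solution would give $Tu=\mu u$ with $\mu\in(0,1]$. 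With that lemma substituted for your Claim 2, the rest of your argument (additivity and the solution property on the annulus) goes through.
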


\begin{lemma}\label{lm3}For the Green function $G(t,s)$ defined by Lemma \ref{lm1}, a positive function $\varphi\in C(a,b)$ exists such that $$\min\limits_{t\in\left[\frac{3a+b}{4}, \frac{3b+a}{4}\right]}G(t,s)\geq \varphi(s)G(s,s),\, a<s<b.$$
\end{lemma}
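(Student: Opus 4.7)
The plan is to take the candidate
\[
\varphi(s) := \frac{\min_{t\in[c,d]} G(t,s)}{G(s,s)}, \qquad c := \tfrac{3a+b}{4},\; d := \tfrac{3b+a}{4},
\]
so that the claimed inequality holds as an equality by construction. It then remains to verify that $\varphi$ is continuous on $(a,b)$ and strictly positive there, which reduces the lemma to two structural facts about $t\mapsto G(t,s)$.

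The first structural fact is unimodality. Lemma \ref{lm2}(ii) gives $\max_{t\in[a,b]} G(t,s) = G(s,s)$, and direct substitution into Lemma \ref{lm1} yields $G(a,s) = G(b,s) = 0$. On $[a,s]$ the second branch $\bigl(\tfrac{t-a}{b-a}\bigr)^{\gamma-1}\tfrac{(b-s)^{\alpha-1}}{\Gamma(\alpha)}$ is manifestly strictly increasing in $t$, so on $[s,b]$ the first branch must be strictly decreasing from $G(s,s)>0$ down to $0$, hence strictly positive on $[s,b)$. Consequently $G(\cdot,s)$ is unimodal on $[a,b]$ with the zero set $\{a,b\}$, so its minimum over the compact sub-interval $[c,d]\subset(a,b)$ is attained at an endpoint of $[c,d]$ and is strictly positive.

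Given this, I would then perform a three-way case split to obtain an explicit piecewise form of $\min_{[c,d]} G(\cdot,s)$. For $s\in(a,c]$ the interval $[c,d]$ lies entirely to the right of $s$, and since $G(\cdot,s)$ is decreasing on $[s,b]$, the minimum is $G(d,s)$. For $s\in[d,b)$ the interval lies to the left of $s$, and the second-branch monotonicity gives minimum $G(c,s)$. For $s\in(c,d)$ the minimum is $\min\{G(c,s),G(d,s)\}$. Dividing each case by $G(s,s)>0$ yields a continuous positive expression on the corresponding sub-interval; the three expressions match at the junction points $s=c,\; s=d$, so $\varphi\in C(a,b)$ and $\varphi>0$ on $(a,b)$, as required.

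The main obstacle I expect is the strict positivity at the interior points $t=c,d$: Lemma \ref{lm2}(i) only delivers $G\ge 0$, so one must combine it with the uniqueness of the maximum at $t=s$, the vanishing at $t=a,b$, and the explicit monotonicity on the second branch to rule out any interior zeros of $G(\cdot,s)$. Once that is nailed down carefully — possibly by a short direct inspection of the first-branch formula near $t=b$ — the continuity bookkeeping and the case-by-case identification of the minimum are entirely routine.
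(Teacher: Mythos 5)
Your construction is the same as the paper's: the authors also take $\varphi(s)$ to be the ratio of $\min_{t\in[c,d]}G(t,s)$ to $G(s,s)$ with $c=\tfrac{3a+b}{4}$, $d=\tfrac{3b+a}{4}$, identify the minimum through the same three-way case split (which they then consolidate into two cases separated by the crossover point $r$ where $G_+(d,\cdot)=G_-(c,\cdot)$), and rely on monotonicity of $G(\cdot,s)$ on either side of $t=s$. So in structure there is nothing new or different here.

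There is, however, one step in your argument that is a genuine non sequitur: from ``the second branch is increasing on $[a,s]$, the maximum is at $t=s$, and $G(b,s)=0$'' you conclude that the first branch ``must be strictly decreasing'' on $[s,b]$. That implication is false in general; a function can rise to its maximum, then oscillate or dip and recover before reaching zero at the right endpoint, so unimodality does not follow from these three facts. The decreasing behaviour of $G_+(t,s)=\bigl(\tfrac{t-a}{b-a}\bigr)^{\gamma-1}(b-s)^{\alpha-1}-(t-s)^{\alpha-1}$ in $t$ has to be argued from the formula itself, e.g.\ by examining $\tfrac{\partial G_+}{\partial t}$, and this is not immediate: near $t=b$ the sign of $\tfrac{\partial G_+}{\partial t}$ depends on the sign of $\tfrac{(\gamma-1)(b-s)}{b-a}-(\alpha-1)$, which can be positive when $\gamma>\alpha$ and $s$ is close to $a$. (To be fair, the paper's own proof simply asserts this monotonicity with the phrase ``we know that,'' so the gap is inherited rather than introduced; but since your text offers an explicit justification and that justification is invalid, it needs to be replaced by an actual computation, and the computation is exactly where the delicate point of the lemma lies.) Everything else --- the endpoint identification of the minimum, the continuity matching at $s=c$, $s=d$ (or at $r$), and the division by $G(s,s)>0$ --- is routine once that monotonicity is secured.
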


\begin{proof}From Lemma \ref{lm2} known that $$G(t,s)\geq 0,\,t,s\in[a,b].$$ Moreover, we know that Green's function $G(t,s)$ is decreasing with respect to $t$ for $s\leq t$ and increasing with respect to $t$ for $t\leq s.$ Obviously, $$G(t,s)> 0,\,t,s\in(a,b)$$ and one can seek the minimum in the interval $\left[\frac{3a+b}{4}, \frac{3b+a}{4}\right].$ Then for $t\in \left[\frac{3a+b}{4}, \frac{3b+a}{4}\right]$ we have
\begin{align*}\min\limits_{t\in\left[\frac{3a+b}{4}, \frac{3b+a}{4}\right]}G(t,s)\\&= \left\{\begin{array}{l}G_+\left(\frac{3b+a}{4},s\right),\qquad\qquad\qquad\qquad \,\,\,\,\,\,\,\,\,\,\,if\,\,s\in\left(a,\frac{3a+b}{4}\right],\\{}\\ \min\left\{G_+\left(\frac{3b+a}{4},s\right),G_+\left(\frac{3a+b}{4},s\right) \right\},\,\,\,\,if\,\,s\in\left[\frac{3a+b}{4}, \frac{3b+a}{4}\right],\\{}\\G_-\left(\frac{3a+b}{4},s\right),\qquad\qquad\qquad\qquad \,\,\,\,\,\,\,\,\,\,\,if\,\,s\in \left[\frac{3b+a}{4},b\right),
\end{array}\right.\\&=\left\{\begin{array}{l}G_+\left(\frac{3b+a}{4},s\right),\,\,\, if\,\,s\in\left(a,r\right],\\{}\\ G_-\left(\frac{3a+b}{4},s\right),\,\,\, if\,\,s\in \left[r,b\right),
\end{array}\right.\\&=\frac{1}{\Gamma(\alpha)} \left\{\begin{array}{l}\left(\frac{3}{4}\right)^{\gamma-1}(b-s)^{\alpha-1}- \frac{(3b+a-4s)^{\alpha-1}}{4^{\alpha-1}}, \,\,\, if\,\,s\in\left(a,r\right],\\{}\\ \frac{1}{4}^{\gamma-1}(b-s)^{\alpha-1},\,\,\, if\,\,s\in \left[r,b\right),
\end{array}\right.
\end{align*} where $\frac{3a+b}{4}<r<\frac{3b+a}{4}$ is the unique solution of equation $$G_+\left(\frac{3b+a}{4},s\right)=G_-\left(\frac{3a+b}{4},s\right).$$ Set $$\varphi(s)= \left\{\begin{array}{l}\frac{G_+\left(\frac{3b+a}{4},s\right)}{G_+\left(s,s\right)}, \,\,\, if\,\,s\in\left(a,r\right],\\{}\\ \frac{G_-\left(\frac{3a+b}{4},s\right)} {G_-\left(s,s\right)},\,\,\, if\,\,s\in \left[r,b\right).
\end{array}\right.$$ The proof is complete. \end{proof}

Define the cone $K\subset C\left([a,b]\right)$ by $$K=\left\{u\in C\left([a,b]\right): u(t) \geq 0,\,a\leq t\leq b \right\}.$$

\begin{lemma}\label{lm4}Let $T: K\rightarrow C\left([a,b]\right)$ be the operator defined by \begin{equation}\label{15}Tu(t)=\int\limits_a^b G(t,s) q(s)f(u(s))ds,\end{equation} then $T: K\rightarrow K$ is completely continuous.
\end{lemma}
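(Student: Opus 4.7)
The plan is the standard Arzel\`a--Ascoli scheme adapted to this integral operator. I would verify three things: (a) $T$ sends $K$ into $K$; (b) $T$ is continuous on $K$; and (c) $T$ maps norm-bounded subsets of $K$ into relatively compact subsets of $C([a,b])$.

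For (a), I would combine Lemma \ref{lm2}(i), which gives $G(t,s)\ge 0$ on $[a,b]\times[a,b]$, with the standing hypotheses $q\ge 0$ and $f:\mathbb{R}_+\to\mathbb{R}_+$ continuous, so that the integrand defining $(Tu)(t)$ is nonnegative for every $u\in K$. Continuity of $t\mapsto (Tu)(t)$ will follow from the continuity of $G$ discussed below, so $Tu\in K$. For (b), if $u_n\to u$ in $C([a,b])$ then $\{u_n\}$ is uniformly bounded, $f$ is uniformly continuous on that bounded range, and hence $f(u_n(\cdot))\to f(u(\cdot))$ uniformly on $[a,b]$; combining this with Lemma \ref{lm2}(ii)--(iii), which gives the uniform bound $M_G:=\max_{s\in[a,b]}G(s,s)<\infty$, yields
$$\|Tu_n-Tu\|_\infty\le M_G\,\|q\|_{L^1}\,\|f\circ u_n-f\circ u\|_\infty\longrightarrow 0.$$

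For (c), fix $R>0$ and consider $B_R=\{u\in K:\|u\|\le R\}$. Setting $M_R:=\max_{0\le x\le R}f(x)$, Lemma \ref{lm2} gives
$$|(Tu)(t)|\le M_R\int_a^b G(s,s)|q(s)|\,ds\le M_G M_R\|q\|_{L^1},$$
so $T(B_R)$ is uniformly bounded. For equicontinuity, I would observe that because $1<\alpha\le\gamma<2$ the functions $t\mapsto(t-a)^{\gamma-1}$ and $(t,s)\mapsto (t-s)^{\alpha-1}_+$ are continuous on $[a,b]$ and $[a,b]^2$ respectively (the critical point is that $\alpha-1>0$, which makes the two pieces of $G$ match continuously at $s=t$ and removes the apparent singularity). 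Hence $G$ is continuous on the compact square $[a,b]^2$ and therefore uniformly continuous. Given $\varepsilon>0$, choose $\delta>0$ with $|G(t_2,s)-G(t_1,s)|<\varepsilon$ whenever $|t_2-t_1|<\delta$, uniformly in $s$; then
$$|(Tu)(t_2)-(Tu)(t_1)|\le\int_a^b |G(t_2,s)-G(t_1,s)|\,|q(s)|\,f(u(s))\,ds\le \varepsilon\,M_R\|q\|_{L^1}$$
for every $u\in B_R$, proving equicontinuity. Arzel\`a--Ascoli then delivers relative compactness of $T(B_R)$, and together with (b) this means $T$ is completely continuous.

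The only real obstacle is the equicontinuity step, and it is entirely caused by the piecewise definition of $G$: one must confirm that the two branches agree continuously at $s=t$. This is where the assumption $\alpha>1$ is decisive, since $(t-s)^{\alpha-1}\to 0$ as $s\uparrow t$, so no singular kernel estimate is needed and uniform continuity of $G$ on $[a,b]^2$ can be invoked directly. Everything else is a routine application of dominated convergence and the bounds on the Green function furnished by Lemma \ref{lm2}.
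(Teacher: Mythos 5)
Your proposal is correct, and the overall scheme (map into the cone, continuity of $T$, uniform boundedness, equicontinuity, Arzel\`a--Ascoli) is the same as the paper's. The one place where you genuinely diverge is the equicontinuity step. The paper works quantitatively: it prescribes an explicit $\delta=\bigl(\Gamma(\alpha)\varepsilon/(M_0(b-a)^{\alpha+1-\gamma})\bigr)^{1/(\gamma-1)}+a$, splits the integral over $[a,t_1]$, $[t_1,t_2]$, $[t_2,b]$, reduces everything to the increment $(t_1-a)^{\gamma-1}-(t_2-a)^{\gamma-1}$, and then treats two cases ($t_2\geq\delta$ via the mean value theorem, $t_2<\delta$ directly), i.e.\ it effectively computes a modulus of continuity for $t\mapsto(t-a)^{\gamma-1}$. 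You instead observe that, since $\alpha>1$, the branch $(t-s)^{\alpha-1}$ vanishes as $s\uparrow t$, so the two pieces of \eqref{Green} agree on the diagonal and $G$ is continuous, hence uniformly continuous, on the compact square $[a,b]^2$; equicontinuity of $T(B_R)$ then falls out in one line with the bound $\varepsilon M_R\|q\|_{L^1}$. Your route is softer and shorter, avoids the casework, and only needs $q\in L^1$ (it never differentiates or estimates increments of the kernel explicitly); the paper's route yields an explicit $\delta$ in terms of $\varepsilon$ but is more delicate to justify rigorously (its pointwise bound on $G(t_1,s)-G(t_2,s)$ silently discards the $(t_i-s)^{\alpha-1}$ terms). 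Both arguments correctly use the nonnegativity of $G$, $q$, $f$ for the cone invariance and the bound $G(t,s)\leq G(s,s)$ from Lemma \ref{lm2}, so your proof is a valid, and arguably cleaner, substitute.
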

\begin{proof}The operator $T: K\rightarrow K$ is continuous in view of non-negativeness and continuity of $G(t,s),$ $q(s)$ and $f(u).$

Let $\Omega\subset K$ be bounded, i.e., there exists a positive constant $M>0$ such that $\|u\|\leq M,$ for all $u\in \Omega.$ Let $$M_0=\max\limits_{a\leq t\leq b,0\leq u\leq M}\left(|qf(u)|+1\right),$$ then, for $u\in\Omega,$ we have $$|Tu(t)|\leq\int\limits_a^b |G(t,s)| |q(s)||f(u(s))|ds\leq M_0\int\limits_a^b G(s,s)ds.$$ Hence, $T$ is bounded in $\Omega.$

On the other hand, given $\varepsilon > 0,$ setting $$\delta=\left(\frac{\Gamma(\alpha)\varepsilon}{M_0(b-a)^{\alpha+1-\gamma}}\right)^{\frac{1}{\gamma-1}}+a,$$ then, for each $u\in\Omega,$ $t_1,t_2\in[a,b],\, t_2<t_1$ and $t_1-t_2<\delta-a,$ one has $$\left|Tu(t_1)-Tu(t_2)\right|<\varepsilon.$$ That is to say, $T$ is equicontinuous in $\Omega.$

In fact, \begin{align*}\left|Tu(t_1)-Tu(t_2)\right|\\&= \left|\int\limits_a^b G(t_1,s) q(s)f(u(s))ds-\int\limits_a^b G(t_2,s) |q(s)f(u(s))|ds\right|\\&= \int\limits_a^{t_1} \left[G(t_1,s) -G(t_2,s)\right] |q(s)f(u(s))|ds\\& = \int\limits_{t_1}^{t_2} \left[G(t_1,s) -G(t_2,s)\right] |q(s)f(u(s))|ds\\&= \int\limits_{t_2}^b \left[G(t_1,s) -G(t_2,s)\right] |q(s)f(u(s))|ds\\& <\frac{M_0}{\Gamma(\alpha)(b-a)^{\gamma-1}}\int\limits_a^{t_1}(b-s)^{\alpha-1} \left[(t_1-a)^{\gamma-1}-(t_2-a)^{\gamma-1}\right]ds\\&+ \frac{M_0}{\Gamma(\alpha)(b-a)^{\gamma-1}}\int\limits_{t_1}^{t_2}(b-s)^{\alpha-1} \left[(t_1-a)^{\gamma-1}-(t_2-a)^{\gamma-1}\right]ds\\&+ \frac{M_0}{\Gamma(\alpha)(b-a)^{\gamma-1}}\int\limits_{t_2}^b(b-s)^{\alpha-1} \left[(t_1-a)^{\gamma-1}-(t_2-a)^{\gamma-1}\right]ds\\& <\frac{M_0}{\Gamma(\alpha)(b-a)^{\gamma-\alpha-1}} \left[(t_1-a)^{\gamma-1}-(t_2-a)^{\gamma-1}\right].\end{align*}
In the following, we divide the proof into two cases.

{\it Case 1.} Let $\delta\leq t_2<t_1<b,$ then by using the mean value theorem we have \begin{align*}\left|Tu(t_1)-Tu(t_2)\right|\\& <\frac{M_0}{\Gamma(\alpha)(b-a)^{\gamma-\alpha-1}} \left[(t_1-a)^{\gamma-1}-(t_2-a)^{\gamma-1}\right]\\&\leq \frac{M_0(\gamma-1)(\delta-a)^{\gamma-2}}{\Gamma(\alpha)(b-a)^{\gamma-\alpha-1}} \left[t_1-t_2\right]\\&\leq \frac{M_0(\delta-a)^{\gamma-2}}{\Gamma(\alpha)(b-a)^{\gamma-\alpha-1}} \left[t_1-t_2\right]\leq \varepsilon.\end{align*}

{\it Case 2.} Let $a\leq t_2<t_1<\delta.$ Then \begin{align*}\left|Tu(t_1)-Tu(t_2)\right|\\& <\frac{M_0}{\Gamma(\alpha)(b-a)^{\gamma-\alpha-1}} \left[(t_1-a)^{\gamma-1}-(t_2-a)^{\gamma-1}\right]\\&\leq \frac{M_0}{\Gamma(\alpha)(b-a)^{\gamma-\alpha-1}}(\delta-a)^{\gamma-1}\leq \varepsilon.\end{align*} By the Arzela--Ascoli theorem, the operator $T: K\rightarrow K$ is completely continuous.
The proof is complete.
\end{proof}

Denote \begin{equation}\label{theta}\theta=\left(\int\limits^b_a G(s,s) q(s)ds\right)^{-1}\end{equation} and \begin{equation}\label{theta*}\theta^*=\left(\int\limits^{\frac{3b+a}{4}}_{\frac{3a+b}{4}} G(s,s) \varphi(s)q(s)ds\right)^{-1}.\end{equation}

To prove the existence of nontrivial positive solutions to the fractional boundary
value problem \eqref{7} we consider the following hypotheses:

\vspace{2mm}

(A) $f(u)\geq \theta^*r_1$ for $u\in[0,r_1];$

\vspace{2mm}

(B) $f(u)\leq \theta r_2$ for $u\in[0,r_2],$
\vspace{2mm}\\
where $f:\,\mathbb{R}_+\rightarrow \mathbb{R}_+$ is continuous.

\begin{theorem}\label{th3}Let $q:\,[a,b]\rightarrow \mathbb{R}_+$ be a nontrivial Lebesgue integrable function. Assume that there exists two positive constants $0<r_1<r_2$ such that the assumptions (A) and (B) are satisfied. Then the fractional boundary value problem \eqref{7} has at least one nontrivial positive solution $u$ belonging to $X$ such that $$r_1\leq \|u\|\leq r_2.$$
\end{theorem}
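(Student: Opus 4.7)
The plan is to apply Krasnoselskii's fixed point theorem (Lemma \ref{lmGK}) to the completely continuous operator $T: K \to K$ defined in \eqref{15}, whose fixed points are exactly the solutions of problem \eqref{7} by Lemma \ref{lm1}. I would take $X = C([a,b])$ with the Chebyshev norm and the cone $K$ already introduced, and define the two open balls
\[
\Omega_1 = \{u \in X : \|u\| < r_1\}, \qquad \Omega_2 = \{u \in X : \|u\| < r_2\},
\]
so that $0 \in \Omega_1 \subset \overline{\Omega}_1 \subset \Omega_2$ because $0 < r_1 < r_2$. It then suffices to verify case (ii) of Lemma \ref{lmGK}: $\|Tu\| \leq \|u\|$ on $K \cap \partial \Omega_1$ and $\|Tu\| \geq \|u\|$ on $K \cap \partial \Omega_2$. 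Wait—actually the natural pairing here is the other way around, because hypothesis (A) forces $f$ to be bounded below on the small ball and hypothesis (B) bounds $f$ above on the large ball, so I would use case (i): $\|Tu\| \geq \|u\|$ on $K \cap \partial \Omega_1$ and $\|Tu\| \leq \|u\|$ on $K \cap \partial \Omega_2$.

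For the inner estimate, take $u \in K \cap \partial \Omega_1$, so that $0 \leq u(s) \leq r_1$ for every $s \in [a,b]$; hypothesis (A) then yields $f(u(s)) \geq \theta^\ast r_1$. Using Lemma \ref{lm3} to bound $G(t,s)$ from below by $\varphi(s) G(s,s)$ on the interval $\bigl[\frac{3a+b}{4}, \frac{3b+a}{4}\bigr]$, I would estimate, for any $t$ in this subinterval,
\[
Tu(t) \geq \int_{\frac{3a+b}{4}}^{\frac{3b+a}{4}} \varphi(s)\, G(s,s)\, q(s)\, f(u(s))\, ds \geq \theta^\ast r_1 \int_{\frac{3a+b}{4}}^{\frac{3b+a}{4}} \varphi(s)\, G(s,s)\, q(s)\, ds = r_1,
\]
by the definition \eqref{theta*} of $\theta^\ast$. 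Taking the supremum in $t$ gives $\|Tu\| \geq r_1 = \|u\|$, as required.

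For the outer estimate, take $u \in K \cap \partial \Omega_2$, so that $0 \leq u(s) \leq r_2$ on $[a,b]$; by hypothesis (B), $f(u(s)) \leq \theta r_2$. Using the bound $G(t,s) \leq G(s,s)$ from Lemma \ref{lm2}(ii) and the nonnegativity of $q$, I would write
\[
Tu(t) \leq \int_a^b G(s,s)\, q(s)\, f(u(s))\, ds \leq \theta r_2 \int_a^b G(s,s)\, q(s)\, ds = r_2,
\]
by the definition \eqref{theta} of $\theta$. Taking the supremum yields $\|Tu\| \leq r_2 = \|u\|$.

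Having verified both inequalities, Lemma \ref{lmGK} produces a fixed point $u \in K \cap (\overline{\Omega}_2 \setminus \Omega_1)$, which is a nonnegative solution of \eqref{7} with $r_1 \leq \|u\| \leq r_2$. The only subtle point is checking that this $u$ is genuinely nontrivial and positive: nontriviality follows from $\|u\| \geq r_1 > 0$, while strict positivity on $(a,b)$ follows from the representation $u(t) = \int_a^b G(t,s) q(s) f(u(s)) ds$ together with the positivity of $G$ in the interior and the fact that $q$ is nontrivial and $f(u) > 0$ where $u$ is close to its maximum. I expect the only mildly delicate part to be the bookkeeping with $\varphi$ in the lower estimate; everything else is a direct application of the calibrated constants $\theta$ and $\theta^\ast$.
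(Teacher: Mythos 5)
Your proposal is correct and follows essentially the same route as the paper: Krasnoselskii's fixed point theorem (case (i)) applied to the completely continuous operator $T$ from Lemma \ref{lm4}, with the lower estimate on $K\cap\partial\Omega_1$ obtained from Lemma \ref{lm3} and the definition of $\theta^\ast$, and the upper estimate on $K\cap\partial\Omega_2$ from Lemma \ref{lm2}(ii)--(iii) and the definition of $\theta$. Your handling of the restriction of the integral to $\bigl[\frac{3a+b}{4},\frac{3b+a}{4}\bigr]$ and the closing remark on nontriviality are, if anything, slightly more careful than the paper's own write-up.
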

\begin{proof} Using the Arzela-Ascoli theorem, we prove that $T: K\rightarrow K$ is completely continuous operator (see. lemma \ref{lm4}). Let $$\Omega_i=\left\{u\in K:\, \|u\|\leq r_i\right\}.$$ From (A) and Lemma \ref{lm3}, we have for $t\in\left[\frac{3a+b}{4},\frac{3b+a}{4}\right]$ and $u\in K\cap \partial\Omega_1$ that \begin{align*}Tu(t)&=\int\limits^b_a G(t,s) q(s) f(u(s))ds\\&\geq \int\limits^b_a \min\limits_{t\in\left[\frac{3a+b}{4},\frac{3b+a}{4}\right]}G(t,s) q(s)f(u(s))ds\\&\geq \int\limits^b_a \varphi(s)G(s,s) q(s)f(u(s))ds\\&\geq \theta^* r_1\int\limits^b_a \varphi(s)G(s,s) q(s)ds\\& \geq \theta^* r_1\int\limits^\frac{3b+a}{4}_\frac{3a+b}{4} \varphi(s)G(s,s) q(s)ds\geq \|u\|.\end{align*}
Thus, $\|Tu\|\geq \|u\|$ for any $u\in K\cap \partial\Omega_1.$  Let us now prove that $\|Tu\|\leq \|u\|$ for all $u\in K\cap \partial\Omega_2.$  From (B), it follows that \begin{align*}\|Tu(t)\|&=\left|\int\limits^b_a \max\limits_{t\in[a,b]}G(t,s) q(s) f(u(s))ds\right|\\&\leq \int\limits^b_a \max\limits_{t\in[a,b]}G(t,s) q(s) f(u(s))ds \\&\leq\int\limits^b_a G(s,s) q(s) f(u(s))ds\\& \leq\theta r_2\int\limits^b_a G(s,s) q(s)ds\leq \|u\|, \end{align*} for $u\in K\cap \partial\Omega_2.$ Thus, from Krasnoselskii fixed point theorem (see. lemma \ref{lmGK}) we conclude that the operator $T$ defined by \eqref{15} has a fixed point in $K\cap \left(\overline{\Omega}_2\backslash \Omega_1\right).$ Therefore, the fractional boundary problem \eqref{7} has at least one positive solution $u$ belonging to $X$ such that $$r_1\leq u\leq r_2.$$ The proof is complete.
\end{proof}

\begin{example}Let $\alpha:=\frac{7}{4},$ $\gamma:=2,$ $a=0,\,b=1,$ $q(t)=t^2,$ $f(u)=\cosh u$ in \eqref{7}. Then we have the following fractional boundary value problem: \begin{equation}\label{ex1}\left\{\begin{array}{l}D_0^{\frac{7}{4},2}u(t)+t^2\cosh u(t)=0,\,t\in(0,1),\\{}\\u(0)=u(1)=0.\end{array}\right.\end{equation} Firstly, let us calculate the values of $\theta$ and $\theta^*.$ Here $$\varphi(s)=\left\{\begin{array}{l}\frac{3}{4s}-\frac{1}{2\sqrt{2}s} \left(\frac{3-4s}{1-s}\right)^{\frac{3}{4}},\,s\in(0,r],\\{}\\ \frac{1}{4s},\,s\in[r,1) \end{array}\right.$$ where $r\approx 0,58.$ Hence, by a simple computation, we get $$\theta\approx 8,9\quad \textrm{and}\quad \theta^*\approx 11,61.$$ Choosing $r_1=\frac{1}{12},\, r_2=\frac{1}{8}$ we obtain $$f(u)=\cosh u\geq \theta^*r_1,\quad \textrm{for}\quad u\in\left[0,\frac{1}{12}\right].$$
$$f(u)=\cosh u\leq \theta r_2,\quad \textrm{for}\quad u\in\left[0,\frac{1}{8}\right].$$ Therefore, from Theorem \ref{th3}, problem \eqref{ex1} has at least one nontrivial solution $u$ in $C\left([0,1]\right)$ such that $\frac{1}{12}\leq\|u\|\leq\frac{1}{8}.$

\end{example}

\section{Lyapunov-type inequality for the problem \eqref{7}}\label{Sec5}

The next result generalizes Theorem \ref{th1} by choosing $f(u) = u$ in Theorem \ref{th4},
inequality \eqref{16} reduces to \eqref{9}. Note that $f \in C(\mathbb{R}_+, \mathbb{R}_+)$ is a concave and nondecreasing function.

\begin{lemma}\label{lmJ} \cite{Rubin} (Jensen's inequality). Let $\mu$ be a positive measure and let $\Omega$ be a measurable set with $\mu(\Omega)=1.$ Let $I$ be an interval and suppose that $u$ is a real function in $L(d\mu)$ with $u(t)\in I$ for all $t\in\Omega.$ If $f$ is convex on $I,$ then \begin{equation}\label{13}f\left(\int\limits_\Omega u(t)d\mu(t) \right)\leq \int\limits_\Omega \left(f\circ u\right)(t)d\mu(t).\end{equation} If $f$ is concave on $I,$ then the inequality \eqref{13} holds with "$\leq$" substituted by "$\geq$".\end{lemma}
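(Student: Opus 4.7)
The plan is to establish Jensen's inequality via the supporting-line characterization of convex functions and then deduce the concave case by symmetry. First I would set
\[
\bar{u} := \int_{\Omega} u(t)\, d\mu(t),
\]
and verify that $\bar{u} \in I$. Since $\mu$ is a probability measure on $\Omega$ and $u(t)\in I$ for $\mu$-a.e.\ $t$, the integral $\bar{u}$ is a $\mu$-weighted convex combination of points of $I$, so it lies in the closure of $I$; a brief separate argument handles the case in which $\bar{u}$ would fall on an endpoint (either $I$ is closed at that endpoint, or $u$ must be $\mu$-a.e.\ equal to that endpoint, in which case the inequality is trivial).

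Next I would invoke the fundamental geometric property of convex functions on an interval: at every point $x_0$ in the interior of $I$ there exists a slope $c \in \mathbb{R}$ (any element of the subdifferential, e.g.\ the left or right derivative of $f$ at $x_0$, which exists by convexity) such that
\[
f(x) \;\geq\; f(x_0) + c\,(x - x_0) \qquad \text{for all } x \in I.
\]
Applying this with $x_0 = \bar{u}$ and with $x = u(t)$ for each $t \in \Omega$, I obtain the pointwise bound
\[
f\bigl(u(t)\bigr) \;\geq\; f(\bar{u}) + c\,\bigl(u(t) - \bar{u}\bigr).
\]
This pointwise estimate shows in particular that the negative part of $f\circ u$ is bounded above by an affine function of $u$, hence $f\circ u$ has a well-defined (possibly $+\infty$) integral against $\mu$.

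The third step is to integrate the pointwise inequality over $\Omega$ against $\mu$; using linearity of the integral and $\mu(\Omega)=1$,
\[
\int_{\Omega} f\bigl(u(t)\bigr)\, d\mu(t) \;\geq\; f(\bar{u}) + c\left(\int_{\Omega} u(t)\,d\mu(t) \;-\; \bar{u}\right) \;=\; f(\bar{u}),
\]
which is precisely \eqref{13}. For the concave case, I would simply apply the result just proved to the convex function $-f$, which reverses the sense of the inequality.

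The one genuinely delicate step is the existence of a supporting line at $\bar{u}$, i.e.\ the subdifferential argument; everything else is algebraic manipulation and invocation of $\mu(\Omega)=1$. In a self-contained treatment I would derive the supporting-line property from the monotonicity of chord slopes of a convex function, which guarantees that the one-sided derivatives $f'_{-}(\bar{u})$ and $f'_{+}(\bar{u})$ exist (at interior points of $I$) and that any $c \in [f'_{-}(\bar{u}), f'_{+}(\bar{u})]$ produces a valid supporting affine minorant. Since the lemma is standard and is cited to \cite{Rubin}, in the paper itself I would simply refer to that reference rather than reproducing these details.
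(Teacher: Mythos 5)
Your proof is correct and is the standard supporting-line (subdifferential) argument, which is precisely the proof given in the cited reference; the paper itself offers no proof of this lemma, simply referring to Rudin. No discrepancies to report.
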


Let $$\|u\|_{L^1}=\|u\|_{L^1\left([a,b]\right)}=\int\limits_a^b|u(s)|ds,$$ where $L^1\left([a,b]\right)$ the space of all real functions, defined on $[a,b],$ which are Lebesgue integrable.

\begin{theorem}\label{th4}Let $q:\,[a,b]\rightarrow \mathbb{R}$ be a real nontrivial Lebesgue integrable function. Assume that $f\in C\left(\mathbb{R}_+, \mathbb{R}_+\right)$ is a concave and nondecreasing function. If the fractional boundary value problem \eqref{7} has a nontrivial solution $u$, then \begin{equation}\label{16} \int\limits_a^b|q(s)|ds> \frac{(\gamma+\alpha-2)^{\gamma+\alpha-2}}{(\alpha-1)^{\alpha-1}} \frac{\Gamma(\alpha)(b-a)^{\gamma-\alpha}} {\left((\gamma-1)b-(\alpha-1)a\right)^{\gamma-1}}\frac{\omega}{f(\omega)},\end{equation} where $\omega=\max\limits_{t\in[a,b]}u(t).$
\end{theorem}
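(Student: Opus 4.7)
The plan is to combine the integral representation of Lemma 1.1 with the Green-function estimates of Lemma 3.2, and then use Jensen's inequality (Lemma 5.1) to extract the nonlinearity $f$, in the same spirit as the proof of Theorem 3.1 but with one extra step handling $f$. First I would invoke Lemma 1.1 to rewrite the nontrivial solution as $u(t)=\int_a^b G(t,s)q(s)f(u(s))\,ds$, take absolute values, use $G\ge 0$, and apply the bound $\max_{t\in[a,b]}G(t,s)=G(s,s)$ from Lemma 3.2(ii) to produce
\[
\omega \;=\; \max_{t\in[a,b]} u(t) \;\le\; \int_a^b G(s,s)\,|q(s)|\,f(u(s))\,ds.
\]

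Second, let $J=\int_a^b G(s,s)|q(s)|\,ds$, which is strictly positive because $q$ is nontrivial and $G(s,s)>0$ on $(a,b)$ by Lemma 3.2; introduce the probability measure $d\mu(s)=J^{-1}G(s,s)|q(s)|\,ds$ on $[a,b]$. Since $f$ is concave, Jensen's inequality yields
\[
\int_a^b f(u(s))\,d\mu(s) \;\le\; f\!\left(\int_a^b u(s)\,d\mu(s)\right).
\]
Because $u(s)\le\omega$ pointwise, the weighted average $\int u\,d\mu$ is itself bounded by $\omega$, and monotonicity of $f$ then gives $f\!\left(\int u\,d\mu\right)\le f(\omega)$. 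Multiplying through by $J$, I obtain $\int_a^b G(s,s)|q(s)|f(u(s))\,ds \le J\,f(\omega)$.

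Third, I would bound $J$ by pulling $G(s,s)$ out at its maximum, using the explicit value from Lemma 3.2(iii):
\[
J \;\le\; \frac{(\alpha-1)^{\alpha-1}}{(\gamma+\alpha-2)^{\gamma+\alpha-2}}\,\frac{\bigl((\gamma-1)b-(\alpha-1)a\bigr)^{\gamma-1}}{\Gamma(\alpha)(b-a)^{\gamma-\alpha}}\int_a^b|q(s)|\,ds.
\]
Chaining the three inequalities gives $\omega \le f(\omega)\cdot\max G(s,s)\cdot\int_a^b|q(s)|\,ds$; solving for $\int_a^b|q(s)|\,ds$ yields \eqref{16}. The strictness of the inequality comes, as in Theorem 3.1, from the fact that $u$ is nontrivial and continuous, so the bounds $G(t,s)\le G(s,s)$ and $u(s)\le\omega$ cannot be saturated simultaneously on a set of full measure.

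I do not foresee any real obstacle; the only genuine design choice is the weight defining $\mu$. Taking $d\mu\propto G(s,s)|q(s)|\,ds$ (rather than the simpler $d\mu\propto|q(s)|\,ds$) matches the measure already present under the integral, so the bound $\int u\,d\mu\le\omega$ is immediate and the estimate collapses cleanly. With $f(u)=u$ the chain becomes an equality at each step and one recovers Theorem 3.1, which is the consistency check promised in the remark preceding the statement.
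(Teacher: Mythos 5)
Your proof is correct and follows essentially the same route as the paper's: the Green-function representation from Lemma \ref{lm1}, the pointwise bound by $G(s,s)$ and its maximum from Lemma \ref{lm2}, then Jensen's inequality for the concave $f$ followed by monotonicity to replace $f(u)$ by $f(\omega)$. The only immaterial difference is your choice of Jensen weight $G(s,s)|q(s)|\,ds$ instead of the paper's $|q(s)|\,ds$ (the paper pulls the Green-function maximum out before applying Jensen, you do it after); both yield the same final chain.
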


\begin{proof}The proof makes use of Lemma \ref{lm1}. We have $$|u(t)|\leq\int\limits_a^b G(t,s) |q(s)||f(u(s))|ds.$$ So \begin{align*}\|u(t)\|&\leq\int\limits_a^b G(t,s) |q(s)||f(u(s))|ds\\& \leq\int\limits_a^b \max\limits_{s\in[a,b]}G(s,s) |q(s)||f(u(s))|ds\\& \leq\frac{(\alpha-1)^{\alpha-1}}{(\gamma+\alpha-2)^{\gamma+\alpha-2}} \frac{\left((\gamma-1)b-(\alpha-1)a\right)^{\gamma-1}} {\Gamma(\alpha)(b-a)^{\gamma-\alpha}}\int\limits_a^b |q(s)||f(u(s))|ds.\end{align*} Using Jensen's inequality \eqref{13}, and taking into account that $f$ is concave and nondecreasing, we get that \begin{align*}\|u(t)\|& \leq\frac{(\alpha-1)^{\alpha-1}}{(\gamma+\alpha-2)^{\gamma+\alpha-2}} \frac{\left((\gamma-1)b-(\alpha-1)a\right)^{\gamma-1}} {\Gamma(\alpha)(b-a)^{\gamma-\alpha}}\int\limits_a^b |q(s)||f(u(s))|ds\\&  =\frac{(\alpha-1)^{\alpha-1}}{(\gamma+\alpha-2)^{\gamma+\alpha-2}} \frac{\left((\gamma-1)b-(\alpha-1)a\right)^{\gamma-1}} {\Gamma(\alpha)(b-a)^{\gamma-\alpha}}\|q\|_{L^1}\int\limits_a^b \frac{|q(s)||f(u(s))|}{\|q\|_{L^1}}ds \\&\leq \frac{(\alpha-1)^{\alpha-1}}{(\gamma+\alpha-2)^{\gamma+\alpha-2}} \frac{\left((\gamma-1)b-(\alpha-1)a\right)^{\gamma-1}} {\Gamma(\alpha)(b-a)^{\gamma-\alpha}}\|q\|_{L^1}f\left(\int\limits_a^b \frac{|q(s)||u(s)|}{\|q\|_{L^1}}ds\right)\\ &\leq \frac{(\alpha-1)^{\alpha-1}}{(\gamma+\alpha-2)^{\gamma+\alpha-2}} \frac{\left((\gamma-1)b-(\alpha-1)a\right)^{\gamma-1}} {\Gamma(\alpha)(b-a)^{\gamma-\alpha}}f\left(\|u\|\right)\int\limits_a^b |q(s)|ds. \end{align*}

Thus, $$\|u(t)\|\leq \frac{(\alpha-1)^{\alpha-1}}{(\gamma+\alpha-2)^{\gamma+\alpha-2}} \frac{\left((\gamma-1)b-(\alpha-1)a\right)^{\gamma-1}} {\Gamma(\alpha)(b-a)^{\gamma-\alpha}}f\left(\|u\|\right)\int\limits_a^b |q(s)|ds.$$ This concludes the proof.
\end{proof}

\begin{corollary}\label{cor1} Consider the fractional boundary value problem \eqref{7} with $f\in C\left(\mathbb{R}_+, \mathbb{R}_+\right)$  concave and nondecreasing and $q\in L^1\left([a,b], \mathbb{R}_+\right).$ If there exists two positive constants $0<r_1<r_2$ such that $f(u)\geq \theta^*r_1$ for $u\in[0,r_1]$ and $f(u)\leq \theta r_2$ for $u\in[0,r_2],$ then $$\int\limits_a^b q(s)ds> \frac{(\gamma+\alpha-2)^{\gamma+\alpha-2}}{(\alpha-1)^{\alpha-1}} \frac{\Gamma(\alpha)(b-a)^{\gamma-\alpha}} {\left((\gamma-1)b-(\alpha-1)a\right)^{\gamma-1}}\frac{r_1}{f(r_2)}.$$
\end{corollary}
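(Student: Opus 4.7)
The plan is to combine the existence result of Theorem \ref{th3} with the Lyapunov-type inequality of Theorem \ref{th4}, so that the hypotheses of the corollary simultaneously guarantee the existence of a nontrivial solution and control the ratio $\omega/f(\omega)$ appearing in \eqref{16}.

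First I would observe that the assumptions $(A)$ and $(B)$ on $f$ in Theorem \ref{th3} are exactly the hypotheses imposed here (with the same constants $\theta$, $\theta^{*}$, $r_1$, $r_2$ as in \eqref{theta}, \eqref{theta*}), and that $q\in L^{1}([a,b],\mathbb{R}_+)$ is nontrivial by assumption. Hence Theorem \ref{th3} applies and yields a nontrivial positive solution $u$ of problem \eqref{7} satisfying
\[
r_{1}\le \|u\|\le r_{2}, \qquad \omega:=\max_{t\in[a,b]}u(t)=\|u\|.
\]

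Next I would apply Theorem \ref{th4} to this solution. Since $q$ is nonnegative, $|q|=q$, and the inequality \eqref{16} reads
\[
\int_{a}^{b}q(s)\,ds \;>\; \frac{(\gamma+\alpha-2)^{\gamma+\alpha-2}}{(\alpha-1)^{\alpha-1}}\,\frac{\Gamma(\alpha)(b-a)^{\gamma-\alpha}}{\bigl((\gamma-1)b-(\alpha-1)a\bigr)^{\gamma-1}}\,\frac{\omega}{f(\omega)}.
\]
It remains to bound $\omega/f(\omega)$ from below by $r_{1}/f(r_{2})$. This is a short monotonicity argument: the lower estimate $\omega\ge r_{1}$ gives a lower bound on the numerator, while the upper estimate $\omega\le r_{2}$ combined with the fact that $f$ is nondecreasing gives $f(\omega)\le f(r_{2})$, an upper bound on the denominator. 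Together these yield
\[
\frac{\omega}{f(\omega)}\;\ge\;\frac{r_{1}}{f(r_{2})},
\]
which when substituted into the displayed inequality produces exactly the claimed bound.

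I do not expect any real obstacle; the only subtle point is checking that $f(r_{2})>0$ so that the ratio $r_{1}/f(r_{2})$ is well defined, but this is automatic from the hypothesis $f(u)\ge \theta^{*}r_{1}>0$ on $[0,r_{1}]\subset[0,r_{2}]$ together with monotonicity of $f$. The corollary is therefore essentially a direct composition of the two main theorems of Sections \ref{Sec4} and \ref{Sec5}.
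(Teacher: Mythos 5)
Your proof is correct and follows the route the paper intends (the corollary is stated without proof, but it is plainly meant as the composition of Theorem \ref{th3}, which supplies a nontrivial positive solution with $r_1\le\|u\|\le r_2$, and Theorem \ref{th4}, whose inequality \eqref{16} is then weakened via $\omega\ge r_1$ and $f(\omega)\le f(r_2)$). Your extra remarks — that $\omega=\|u\|$ because the solution is nonnegative, and that $f(r_2)>0$ follows from hypothesis (A) plus monotonicity — are exactly the small checks needed to make the composition airtight.
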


\begin{example} Consider the following fractional boundary value problem: \begin{equation*}\left\{\begin{array}{l} D^{\frac{3}{2},\frac{3}{2}}_0 u(t)+\sqrt{t}\exp\left(-\frac{1}{u+1}\right)=0,\,0<t<1,\\{}\\ u(0)=u(1)=0. \end{array}\right.\end{equation*} We have that\\
$f(u)=\exp\left(-\frac{1}{u+1}\right):\, \mathbb{R}_+\rightarrow \mathbb{R}_+$ is continuous, concave and nondecreasing function;\\
$q(t)=\sqrt(t):\, [0,1]\rightarrow \mathbb{R}_+$ is a Lebesgue integral function and $\|q\|_L^1=\frac{2}{3}>0.$

We computed the values of $\theta$ and $\theta^*$ in \eqref{theta} and \eqref{theta*}. Then we have $\theta\approx 4.23$ and $\theta^*\approx 7.29.$ Choosing $r_1=\frac{1}{20}$ and $r_2=\frac{1}{10},$ we obtain \begin{align*}&f(u)=\exp\left(-\frac{1}{u+1}\right)\geq \theta^*r_1,\quad \textrm{for}\quad u\in\left[0,\frac{1}{20}\right];\\& f(u)=\exp\left(-\frac{1}{u+1}\right)\leq \theta r_2,\quad \textrm{for}\quad u\in\left[0,\frac{1}{10}\right].\end{align*}
Therefore, from Corollary \ref{cor1}, we get that $$\int\limits_a^b q(s)ds=\frac{2}{3}> \frac{\Gamma\left(\frac{3}{2}\right)\exp(10/11)}{10}\approx 0.22.$$

\end{example}

\begin{remark}Note that if we set $\gamma=\alpha$ in \eqref{16}, then we obtain Lyapunov-type inequality \eqref{6**}.\end{remark}

\begin{theorem}\label{th5}Let $f\in C\left(\mathbb{R}_+, \mathbb{R}_+\right)$  be nondecreasing and $q\in L^1\left([a,b], \mathbb{R}_+\right).$ If $$f(u)<\theta |u|,$$ then the problem \eqref{7} has no non-trival solution.\\ Here $\theta=\left(\int\limits^b_a G(s,s) q(s)ds\right)^{-1}.$
\end{theorem}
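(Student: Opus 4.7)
The proof plan is a contradiction argument that mirrors the derivation of Theorem 3.1 but replaces the linear bound $q(s)u(s)$ with the pointwise hypothesis $f(u(s)) < \theta|u(s)|$.

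Suppose, aiming at a contradiction, that $u$ is a nontrivial solution of \eqref{7}. By Lemma \ref{lm1}, $u$ satisfies the integral representation
\[
u(t)=\int_a^b G(t,s)\,q(s)\,f(u(s))\,ds,\qquad a\le t\le b.
\]
Since $u$ is nontrivial and continuous, $\|u\|=\sup_{t\in[a,b]}|u(t)|>0$. Taking absolute values and using the non-negativity of $q$ and $f$, then invoking property (ii) of Lemma \ref{lm2} (namely $\max_{t\in[a,b]}G(t,s)=G(s,s)$), I would obtain
\[
\|u\|\le \int_a^b G(s,s)\,q(s)\,f(u(s))\,ds.
\]

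Next I would use the monotonicity of $f$ together with the fact that $|u(s)|\le \|u\|$ to bound $f(u(s))\le f(\|u\|)$ uniformly in $s$, producing
\[
\|u\|\le f(\|u\|)\int_a^b G(s,s)\,q(s)\,ds=\frac{f(\|u\|)}{\theta}.
\]
Now the standing hypothesis $f(u)<\theta|u|$, applied at $u=\|u\|>0$, yields $f(\|u\|)<\theta\|u\|$, so the previous inequality gives $\|u\|<\|u\|$, which is the desired contradiction. Hence no nontrivial solution can exist.

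The argument is essentially routine once Lemmas \ref{lm1} and \ref{lm2} are in hand; the only minor subtlety I would be careful about is that the strict inequality in the hypothesis is used exactly at the value $\|u\|$, and this strictness is preserved in the final step because $\|u\|>0$ for any nontrivial $u$. A short remark at the end could point out that this theorem is the natural converse companion to Theorem \ref{th4}: together they say that solvability of \eqref{7} forces $f(\|u\|)/\|u\|\ge \theta$, sharpening the Lyapunov-type lower bound on $\int_a^b |q|$.
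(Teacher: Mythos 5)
Your proof is correct and follows essentially the same contradiction argument as the paper: bound $\|u\|$ via the integral representation of Lemma \ref{lm1} and the estimate $G(t,s)\le G(s,s)$ from Lemma \ref{lm2}, then invoke the hypothesis $f(u)<\theta|u|$ to get $\|u\|<\|u\|$. The only minor difference is that the paper applies the hypothesis pointwise under the integral sign, whereas you use the monotonicity of $f$ to reduce everything to the single value $u=\|u\|>0$; your variant is in fact slightly cleaner, since the pointwise strict inequality cannot hold at points where $u(s)=0$.
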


\begin{proof} Assume, on the contrary, that there exists $u\neq 0.$ Then, by Lemma \ref{lm2} we have \begin{align*}\|u\|&=\left|\int\limits_a^b G(t,s) q(s)f(u(s))ds\right| \leq \max\limits_{t\in[a,b]}\int\limits_a^b G(t,s) |q(s)||f(u(s))|ds\\& \leq\int\limits_a^b G(s,s) |q(s)||f(u(s))|ds < \theta \int\limits_a^b G(s,s) |q(s)||u|ds\\&\leq \theta \|u\|\int\limits_a^b G(s,s) |q(s)|ds=\|u\|,\end{align*} a contradiction. Therefore, problem \eqref{7} has no solution.
\end{proof}

\section{Hartman-Wintner-Type Inequality for the problem \eqref{7}}\label{Sec6}

Our main result in this section is the following Hartman-Wintner-type inequality.

\begin{theorem}\label{th6} Let the functions $q$ and $f$ satisfy the condition of Theorem \ref{th4}. Suppose that fractional boundary value problem \eqref{7} has a nontrivial continuous solution. Then \begin{equation}\label{17} \int\limits_a^b \left(s-a\right)^{\gamma-1}\left({b - s} \right)^{\alpha  - 1}q^+(s) ds > \frac{\|u(t)\|}{f\left(\|u\|\right)} \Gamma(\alpha)(b-a)^{\gamma-1}.\end{equation} \end{theorem}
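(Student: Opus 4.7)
The plan is to mimic the proof of Theorem \ref{th4}, replacing the global Green-function bound $G(t,s)\le\max_s G(s,s)$ by the sharper pointwise bound $G(t,s)\le G(s,s)$ from Lemma \ref{lm2}(ii). Starting from Lemma \ref{lm1}, $u(t)=\int_a^b G(t,s)q(s)f(u(s))\,ds$, and using $G\ge 0$, $f\ge 0$, and the implicit nonnegativity of $u$ (forced by the domain of $f$), I would subtract the nonnegative contribution of $q^-$ to get
\[
0\le u(t)\le\int_a^b G(t,s)q^+(s)f(u(s))\,ds.
\]

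The key observation is that $G(s,s)$ has an explicit closed form: setting $t=s$ in either branch of \eqref{Green} produces $G(s,s)=\frac{(s-a)^{\gamma-1}(b-s)^{\alpha-1}}{\Gamma(\alpha)(b-a)^{\gamma-1}}$, which is precisely the weight appearing on the left of \eqref{17}. Applying $G(t,s)\le G(s,s)$, taking the supremum in $t$, and then using either the monotonicity of $f$ (so $f(u(s))\le f(\|u\|)$) or Jensen's inequality (Lemma \ref{lmJ}) with the probability measure proportional to $q^+(s)(s-a)^{\gamma-1}(b-s)^{\alpha-1}\,ds$ --- exactly as in the proof of Theorem \ref{th4} --- I would arrive at
\[
\|u\|\le\frac{f(\|u\|)}{\Gamma(\alpha)(b-a)^{\gamma-1}}\int_a^b(s-a)^{\gamma-1}(b-s)^{\alpha-1}q^+(s)\,ds,
\]
and \eqref{17} follows after rearrangement.

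The strict inequality reflects the nontriviality of $u$: since $u$ is continuous with $u(a)=u(b)=0$, its supremum is attained at some interior point $t^*\in(a,b)$, and at that $t^*$ the bound $G(t^*,s)\le G(s,s)$ is strict on a set of positive Lebesgue measure intersecting $\{q^+>0\}$, which prevents any of the intervening estimates from being saturated. I do not anticipate a serious obstacle, since all the hard analytic work on $G$ is already done in Lemma \ref{lm2}; the only items requiring care are the legitimacy of replacing $q$ by $q^+$ (which rests on $u\ge 0$, $f\ge 0$, $G\ge 0$) and the recognition that the explicit value of $G(s,s)$ is itself the Hartman--Wintner weight.
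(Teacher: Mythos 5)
Your proposal is correct and follows essentially the same route as the paper: start from the integral equation of Lemma \ref{lm1}, apply the pointwise bound $G(t,s)\le G(s,s)$ from Lemma \ref{lm2}(ii), recognize that $G(s,s)=\frac{(s-a)^{\gamma-1}(b-s)^{\alpha-1}}{\Gamma(\alpha)(b-a)^{\gamma-1}}$ is exactly the Hartman--Wintner weight, and finish with Jensen's inequality (or monotonicity of $f$) to replace $f(u(s))$ by $f(\|u\|)$. Your way of introducing $q^+$ --- discarding the nonnegative $q^-$ contribution using $G,f,u\ge 0$ --- is in fact cleaner than the paper's, which simply swaps $|q|$ for $q^+$ in the final line.
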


\begin{proof} By Lemma \ref{lm1}, a solution $u\in C\left([a,b]\right)$ to \eqref{7} has the expression
$$u(t)=\int\limits_a^b G(t,s)q(s)f\left(u(s)\right)ds,\,a\leq t\leq b.$$ From this, for any $a\leq t\leq b,$ we obtain \begin{align*}|u(t)|&\leq\int\limits_a^b \left|G(t,s)q(s)f\left(u(s)\right)\right|ds \leq \int\limits_a^b |G(t,s)|\left|q(s)\right|\left|f\left(u(s)\right)\right|ds. \end{align*} By Jensen's inequality \eqref{13}, and taking into account that $f$ is concave and nondecreasing, we get that
\begin{align*}|u(t)|&\leq \int\limits_a^b G(s,s)\left|q(s)\right|\left|f\left(u(s)\right)\right|ds \\&\leq \left\|G(s,s) q(s)\right\|_{L^1} \int\limits_a^b \frac{G(s,s)\left|q(s)\right|\left|f\left(u(s)\right)\right|}{\left\|G(s,s) q(s)\right\|_{L^1}}ds \\&\leq \left\|G(s,s) q(s)\right\|_{L^1} f\left(\int\limits_a^b \frac{G(s,s)\left|q(s)\right|\left|u(s)\right|}{\left\|G(s,s) q(s)\right\|_{L^1}}ds\right) \\&\leq \left\|G(s,s) q(s)\right\|_{L^1} f\left(\|u\|\int\limits_a^b \frac{G(s,s)\left|q(s)\right|}{\left\|G(s,s) q(s)\right\|_{L^1}}ds\right) \\& \leq \int\limits_a^b G(s,s)\left|q(s)\right|ds f\left(\|u\|\right)\\&\leq \frac{1}{\Gamma(\alpha)(b-a)^{\gamma-1}}\int\limits_a^b \left(s-a\right)^{\gamma-1}\left({b - s} \right)^{\alpha  - 1}q^+(s) ds f\left(\|u\|\right).\end{align*} Then we get \begin{align*}\|u(t)\| \leq \frac{1}{\Gamma(\alpha)(b-a)^{\gamma-1}}\int\limits_a^b \left(s-a\right)^{\gamma-1}\left({b - s} \right)^{\alpha  - 1}q^+(s) ds f\left(\|u\|\right).\end{align*}
The proof is complete.
\end{proof}

\begin{corollary}\label{cor1} If $f(u)=u$ (linear case), for $q\in L^1\left([a,b], \mathbb{R}_+\right)$ we obtain $$\int\limits_a^b \left(s-a\right)^{\gamma-1}\left({b - s} \right)^{\alpha  - 1}q^+(s) ds > \Gamma(\alpha)(b-a)^{\gamma-1}.$$
\end{corollary}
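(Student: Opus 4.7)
The plan is to refine the argument of Theorem \ref{th4} by keeping $G(s,s)$ inside the integral rather than replacing it by its crude maximum, so that the weight $(s-a)^{\gamma-1}(b-s)^{\alpha-1}$ — which is exactly $\Gamma(\alpha)(b-a)^{\gamma-1} G(s,s)$ — appears on the left-hand side of \eqref{17}. The first step is to invoke Lemma \ref{lm1} to write any nontrivial solution as $u(t) = \int_a^b G(t,s) q(s) f(u(s)) ds$, then use the nonnegativity of $G$ from Lemma \ref{lm2}(i) together with the pointwise bound $G(t,s) \leq G(s,s)$ from Lemma \ref{lm2}(ii) to reach
$$|u(t)| \leq \int_a^b G(s,s) |q(s)| f(u(s)) ds.$$

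The second step is to apply Jensen's inequality. I would set $M := \int_a^b G(s,s) |q(s)| ds$, which is strictly positive by nontriviality of $q$ and $G$, and introduce the probability measure $d\nu(s) = M^{-1} G(s,s) |q(s)|\, ds$ on $[a,b]$. Since $f$ is concave, Lemma \ref{lmJ} gives
$$\int_a^b f(u(s))\, d\nu(s) \leq f\left(\int_a^b u(s)\, d\nu(s)\right),$$
and since $f$ is nondecreasing while $|u(s)| \leq \|u\|$, the right-hand side is bounded by $f(\|u\|)$. Multiplying through by $M$ yields the key estimate $|u(t)| \leq M\, f(\|u\|)$.

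The third step is purely computational: substitute the closed form $G(s,s) = (s-a)^{\gamma-1}(b-s)^{\alpha-1}/[\Gamma(\alpha)(b-a)^{\gamma-1}]$ into $M$, take the supremum over $t \in [a,b]$, and rearrange. This produces the desired inequality, modulo replacing $|q(s)|$ by $q^+(s)$ on the left-hand side. I expect this last substitution to be the main obstacle, since $q^+ \leq |q|$ in general, so the replacement is not automatic: the argument must invoke the sign of the solution (the positivity theory of Section \ref{Sec4}, where $q \geq 0$ is imposed, or equivalently the observation that $\int G q^- f \geq 0$ always enters $u(t)$ with a controllable sign) to discard the contribution of $q^-$. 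Absent such positivity, the natural conclusion would be the stronger statement with $|q|$ in place of $q^+$, and the final inequality \eqref{17} follows \emph{a fortiori}.
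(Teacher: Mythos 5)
Your argument is correct and follows essentially the same route as the paper: the corollary there is just Theorem \ref{th6} specialized to $f(u)=u$, so that the factor $\|u\|/f(\|u\|)$ becomes $1$, and your proposal simply unrolls the proof of Theorem \ref{th6} (keep $G(s,s)$ inside the integral, apply Jensen, substitute the closed form $G(s,s)=(s-a)^{\gamma-1}(b-s)^{\alpha-1}/\bigl(\Gamma(\alpha)(b-a)^{\gamma-1}\bigr)$) before specializing. Your concern about replacing $|q|$ by $q^{+}$ is resolved exactly as you suspect: the hypothesis $q\in L^{1}\left([a,b],\mathbb{R}_{+}\right)$ gives $q^{+}=|q|=q$, so the substitution is an identity rather than an estimate.
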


\begin{corollary}\label{cor1} Consider the fractional boundary value problem \eqref{7} with $f\in C\left(\mathbb{R}_+, \mathbb{R}_+\right)$  concave and nondecreasing and $q\in L^1\left([a,b], \mathbb{R}_+\right).$ If there exists two positive constants $0<r_1<r_2$ such that $f(u)\geq \theta^*r_1$ for $u\in[0,r_1]$ and $f(u)\leq \theta r_2$ for $u\in[0,r_2],$ then $$\frac{f\left(r_2\right)}{r_1}\int\limits_a^b \left(s-a\right)^{\gamma-1}\left({b - s} \right)^{\alpha  - 1}q^+(s) ds > \Gamma(\alpha)(b-a)^{\gamma-1}.$$
\end{corollary}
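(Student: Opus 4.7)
The plan is to chain together the existence result of Section~\ref{Sec4} with the Hartman--Wintner inequality \eqref{17} just proved. The two hypotheses on $f$ in the corollary are exactly conditions (A) and (B) used in Theorem~\ref{th3}, and $q\in L^1([a,b],\mathbb{R}_+)$ is (implicitly) nontrivial, so as a first step I invoke Theorem~\ref{th3} to produce a nontrivial positive solution $u\in X$ of \eqref{7} satisfying
\[
r_1 \le \|u\| \le r_2.
\]
Since $f$ is also assumed to be concave and nondecreasing, and since $q\ge 0$ gives $q^+=q$, Theorem~\ref{th6} applies to this $u$ and yields
\[
\int_a^b (s-a)^{\gamma-1}(b-s)^{\alpha-1}\,q^+(s)\,ds \;>\; \frac{\|u\|}{f(\|u\|)}\,\Gamma(\alpha)(b-a)^{\gamma-1}.
\]

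The second step is to replace the ratio $\|u\|/f(\|u\|)$ by the explicit constant $r_1/f(r_2)$ using monotonicity of $f$. Since $f$ is nondecreasing and $\|u\|\le r_2$, we have $f(\|u\|)\le f(r_2)$; combined with $\|u\|\ge r_1$ this gives
\[
\frac{\|u\|}{f(\|u\|)} \;\ge\; \frac{r_1}{f(r_2)}.
\]
Note that $f(r_2)>0$: hypothesis (A) applied at $u=0$ gives $f(0)\ge \theta^* r_1>0$, and monotonicity then forces $f(r_2)\ge f(0)>0$, so the division is legitimate. Substituting this bound into the displayed inequality above and multiplying through by $f(r_2)/r_1$ yields the claimed
\[
\frac{f(r_2)}{r_1}\int_a^b (s-a)^{\gamma-1}(b-s)^{\alpha-1}\,q^+(s)\,ds \;>\; \Gamma(\alpha)(b-a)^{\gamma-1}.
\]

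There is no genuine obstacle here; the proof is a direct concatenation of Theorems~\ref{th3} and~\ref{th6} followed by a single monotonicity estimate. The only point requiring a moment's care is the verification that the two theorems' hypotheses are simultaneously satisfied under the corollary's assumptions, which reduces to the positivity of $q$ (so $q^+=q$), the concavity and monotonicity of $f$ (inherited from the corollary), and the nontriviality of $q$ needed to make Theorem~\ref{th3} applicable.
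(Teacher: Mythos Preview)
Your proof is correct and follows exactly the approach the paper intends: the corollary is stated without proof, but its placement immediately after Theorem~\ref{th6} and its parallel structure to the analogous corollary in Section~\ref{Sec5} make clear it is meant to be obtained by combining Theorem~\ref{th3} (existence of a positive solution with $r_1\le\|u\|\le r_2$) with Theorem~\ref{th6}, then replacing $\|u\|/f(\|u\|)$ by $r_1/f(r_2)$ via the monotonicity of $f$. Your additional care in verifying $f(r_2)>0$ is a nice touch that the paper omits.
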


\begin{remark}Note that if we set $\gamma=\alpha=2$ and $f(u)=u$ in \eqref{17}, then we obtain Hartman-Wintner inequality \eqref{HW}.\end{remark}

\begin{remark} As (see \eqref{14}) \begin{multline*}\max\limits_{s\in[a,b]}\left(s-a\right)^{\gamma-1}\left({b - s} \right)^{\alpha  - 1}\\=\frac{(\alpha-1)^{\alpha-1}}{(\gamma+\alpha-2)^{\gamma+\alpha-2}} \frac{\left((\gamma-1)b-(\alpha-1)a\right)^{\gamma-1}}{\Gamma(\alpha)(b-a)^{\gamma-\alpha}},\end{multline*} from Hartman-Wintner-type inequality \eqref{17} we have Lyapunov-type inequality \eqref{16}.\end{remark}

\begin{corollary}Let $\lambda\in \mathbb{R}_+$ be an eigenvalue of the problem \eqref{10}. Then $$\lambda> \frac{\Gamma(\gamma+\alpha)}{\Gamma(\gamma)}(b-a)^{-\alpha}.$$\end{corollary}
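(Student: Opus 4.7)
The plan is to read the statement as a direct application of the Hartman--Wintner type inequality of Theorem \ref{th6} (in its linear specialisation, Corollary \ref{cor1}) to the eigenvalue problem \eqref{10}. If $\lambda\in\mathbb{R}_+$ is an eigenvalue, then the associated nontrivial eigenfunction $u$ solves \eqref{7} with $q(t)\equiv\lambda$ and $f(u)=u$. Since $\lambda>0$, we have $q^+(s)=\lambda$ on $[a,b]$, so Corollary \ref{cor1} (the $f(u)=u$ case of Theorem \ref{th6}) gives
\begin{equation*}
\lambda\int_a^b (s-a)^{\gamma-1}(b-s)^{\alpha-1}\,ds > \Gamma(\alpha)(b-a)^{\gamma-1}.
\end{equation*}

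Next I would evaluate the integral on the left via the standard Beta function substitution $s=a+(b-a)\tau$, which turns it into
\begin{equation*}
\int_a^b (s-a)^{\gamma-1}(b-s)^{\alpha-1}\,ds=(b-a)^{\gamma+\alpha-1}\int_0^1 \tau^{\gamma-1}(1-\tau)^{\alpha-1}\,d\tau=(b-a)^{\gamma+\alpha-1}\,\frac{\Gamma(\gamma)\Gamma(\alpha)}{\Gamma(\gamma+\alpha)}.
\end{equation*}
Substituting this into the previous inequality and dividing by the positive factor $\Gamma(\alpha)(b-a)^{\gamma-1}\Gamma(\gamma)/\Gamma(\gamma+\alpha)$ yields exactly
\begin{equation*}
\lambda > \frac{\Gamma(\gamma+\alpha)}{\Gamma(\gamma)}\,(b-a)^{-\alpha},
\end{equation*}
which is the claimed bound. (Strictly speaking, for problem \eqref{10} we have $a=0$, $b=1$, so the factor $(b-a)^{-\alpha}$ is just $1$; I would keep it general to match the form of the statement.)

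There is essentially no obstacle here: once one recognises that the eigenvalue equation is the special linear instance $q\equiv\lambda$, $f(u)=u$ of \eqref{7}, everything is a routine Beta-integral computation. The only care needed is the sign of $\lambda$ (so that $q^+=\lambda$ and the inequality applies with $q^+$ rather than $|q|$), and noticing that the linear-case corollary already absorbed the factor $f(\|u\|)/\|u\|=1$, so no extra estimation on $u$ is required.
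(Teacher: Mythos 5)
Your proposal is correct and follows exactly the route the paper intends: the corollary is stated right after the linear-case Hartman--Wintner inequality (Corollary of Theorem \ref{th6}), and the only computation needed is the Beta-integral evaluation $\int_a^b (s-a)^{\gamma-1}(b-s)^{\alpha-1}\,ds=(b-a)^{\gamma+\alpha-1}\Gamma(\gamma)\Gamma(\alpha)/\Gamma(\gamma+\alpha)$, which you carry out correctly. Your remark that $f(u)=u$ is concave and nondecreasing (so the theorem applies) and that $q^+\equiv\lambda$ for $\lambda>0$ covers the only points of care.
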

\begin{corollary}If $$\lambda \leq \frac{\Gamma(\gamma+\alpha)}{\Gamma(\gamma)}(b-a)^{-\alpha},$$ then the system of eigenfunctions $$u(\lambda, t)=t^{\gamma-1}E_{\alpha, \gamma}\left(-\lambda t^{\alpha}\right)$$ of eigenvalue problem \eqref{10} has no real positive zeros.\end{corollary}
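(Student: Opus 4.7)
The plan is to argue by contradiction, exploiting the eigenvalue lower bound from the preceding corollary. Suppose there exists $t_0>0$ with $u(\lambda,t_0)=0$; since $t_0^{\gamma-1}>0$, this is equivalent to
$$E_{\alpha,\gamma}(-\lambda t_0^{\alpha})=0.$$
For $\lambda\leq 0$ this is impossible, because every term of the power series $\sum_{k\geq 0}(-\lambda t_0^{\alpha})^{k}/\Gamma(\alpha k+\gamma)$ is then non-negative and the $k=0$ term equals $1/\Gamma(\gamma)>0$. So I may restrict attention to $\lambda>0$ and, for the contradiction below to bite, to $t_0\in(0,b-a]$.

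Next I would reinterpret the putative zero as an eigenvalue condition on the shortened interval $[0,t_0]$. By the solution formula established in Theorem \ref{th2} (together with Property \ref{pr4}), the function $v(t):=t^{\gamma-1}E_{\alpha,\gamma}(-\lambda t^{\alpha})$ satisfies
$$D_{0}^{\alpha,\mu}v(t)+\lambda v(t)=0,\qquad 0<t<t_0,$$
vanishes at $t=0$ (since $\gamma>1$) and at $t=t_0$ (by assumption), and is nontrivial because $v(t)\sim t^{\gamma-1}/\Gamma(\gamma)$ as $t\to 0^{+}$. Hence $\lambda$ is a real positive eigenvalue of the fractional Sturm-Liouville problem obtained from \eqref{10} by replacing the right endpoint $b=1$ with $b=t_0$ and keeping $a=0$.

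Now I would apply the eigenvalue lower bound of the preceding corollary on the interval $[0,t_0]$. That bound is itself a direct consequence of the Hartman-Wintner inequality \eqref{17} with $f(u)=u$ and $q\equiv\lambda$, combined with the Beta integral $\int_{0}^{t_0}s^{\gamma-1}(t_0-s)^{\alpha-1}\,ds=t_0^{\gamma+\alpha-1}\Gamma(\gamma)\Gamma(\alpha)/\Gamma(\gamma+\alpha)$, so it reads on $[0,t_0]$ as
$$\lambda>\frac{\Gamma(\gamma+\alpha)}{\Gamma(\gamma)}\,t_0^{-\alpha}.$$
Since $t_0\leq b-a$ implies $t_0^{-\alpha}\geq (b-a)^{-\alpha}$, this yields $\lambda>\Gamma(\gamma+\alpha)(b-a)^{-\alpha}/\Gamma(\gamma)$, contradicting the standing hypothesis and finishing the proof.

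The main subtle point is an implicit scale generalization: although the preceding eigenvalue-bound corollary is phrased for ``problem \eqref{10}'', its Hartman-Wintner-type proof applies verbatim on any interval $[0,t_0]$, and one must reapply it with the right endpoint equal to $t_0$ rather than $1$ — the factor $(b-a)^{-\alpha}$ becoming $t_0^{-\alpha}$ after the Beta computation. The remaining checks (that $v$ is genuinely nontrivial near $0$, and that the zero of $E_{\alpha,\gamma}$ cannot occur for $\lambda\leq 0$) are routine and should be dispatched at the start of the proof.
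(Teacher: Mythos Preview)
The paper states this corollary without proof, intending it as an immediate consequence of the preceding eigenvalue lower bound (which in turn comes from the Hartman--Wintner inequality together with the Beta integral). Your contradiction argument---reinterpreting a zero at $t_0$ as the eigenvalue condition on the shortened interval $[0,t_0]$ and then invoking the bound there---is precisely the standard way to unpack that implication, and it is correct.

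One clarification is worth making in your write-up. You phrase the restriction to $t_0\in(0,b-a]$ as something imposed ``for the contradiction below to bite'', which reads as if you are discarding a case. In fact the eigenfunctions of problem \eqref{10} live on the interval $[0,1]$ (more generally $[a,b]$, with $b-a=1$ there), so ``real positive zeros'' in the statement can only refer to zeros in $(0,b-a]$; this is the whole domain, not a restriction. Indeed the literal reading ``no zeros for any $t>0$'' would be false: for $\lambda$ just below the threshold, the first zero of $t\mapsto E_{\alpha,\gamma}(-\lambda t^{\alpha})$ sits just above $b-a$. So replace the hedging sentence by the observation that the eigenfunctions are defined on $(0,b-a]$ and hence $t_0\le b-a$ automatically. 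With that adjustment the proof is complete and matches the paper's intended (but unwritten) argument.
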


\section*{Acknowledgements}
The second named author is financially supported by a grant from
the Ministry of Science and Education of the Republic of
Kazakhstan (Grant No. 0819/GF4). This publication is supported by the target program 0085/PTSF-14
from the Ministry of Science and Education of the Republic of Kazakhstan.

\end{document}